\documentclass[12pt,a4paper,english]{smfart}
\usepackage{amssymb,amsmath,amscd,stmaryrd,vmargin,fancybox,calc}
\usepackage[english]{babel}
\usepackage[all]{xy}
\theoremstyle{plain}
\newtheorem{theo}{Theorem}[section]
\newtheorem{prop}[theo]{Proposition}
\newtheorem{cor}[theo]{Corollary}
\newtheorem{lem}[theo]{Lemma}
\newtheorem{defi}[theo]{Definition}

\def \demdu#1 { {\sl Proof #1.} }
\def \goth#1 {\mathfrak{#1}}
\def \cal#1 {\mathcal{#1}}
\def \Gal {\text{\rm Gal}}
\def \Hom {\text{\rm Hom}}

\title[]{On equivariant characteristic ideals of real classes}    
\author{Thong Nguyen Quang Do}
\address{Laboratoire de Math{\'e}matiques,
UMR 6623 de l'Universit{\'e} de Franche-Comt{\'e} et du CNRS\\
16 route de Gray, 25030 Besan\c{c}on cedex, France}
\email{	thong.nguyen-quang-do@univ-fcomte.fr}

\begin{document}
\bibliographystyle{smfalpha}
\frontmatter

\begin{abstract}
Let $p$ be an odd prime, $F/{\Bbb Q}$ an abelian totally real number field, $F_\infty/F$ its cyclotomic 
${\Bbb Z}_p$-extension, $G_\infty = Gal (F_\infty / {\Bbb Q}),$ ${\Bbb A} = {\Bbb Z}_p [[G_\infty]].$
 We give an explicit description of the equivariant characteristic ideal of $H^2_{Iw} (F_\infty, {\Bbb Z}_p(m))$
 over ${\Bbb A}$ for all odd $m \in {\Bbb Z}$ by applying M. Witte's formulation of an equivariant main conjecture 
(or ``limit theorem'') due to Burns and Greither. This could shed some light on Greenberg's conjecture on the vanishing 
of the $\lambda$-invariant of $F_\infty/F.$
\end{abstract}

\subjclass{11R23}
\keywords{cohomology modules, perfect torsion complexes, limit theorem, equivariant characteristic ideal.}
\maketitle

\mainmatter

\setcounter{section}{-1}
\section{Introduction}

Fix an odd prime $p$ and a Galois extension $F/k$ of totally real number fields, with group $G = Gal (F/k)$.
 Let $F_\infty = \displaystyle\mathop\cup_{n \geq 0} F_n$ the cyclotomic ${\Bbb Z}_p$-extension of $F$, 
$\Gamma = Gal (F_\infty/K)$, $G_\infty = Gal (F_\infty/k)$, $\Lambda = {\Bbb Z}_p [[\Gamma]]$, 
${\Bbb A} = {\Bbb Z}_p [[G_\infty]]$. Take $S_f = S_p \cup\, Ram(F/k)$, $S = S_\infty \cup S_f$, where $S_\infty$ 
(resp. $S_p)$ is the set of archimedean primes (resp. primes above $p)$ of $k$ and $Ram\ (F/k)$ is the set of places of $k$ which ramify in $F/k$. By abuse of language, we also denote by $S$ the set of primes above $S$ in any extension of 
$k$. Let $G_S (F_n)$ be the Galois group over $F_n$ of the maximal algebraic $S$-ramified (i.e. unramified outside $S$)
extension of $F_n$. Since $p \not= 2$, $cd_p (G_S (F_n)) \leq 2$ and since $S$ contains $S_p$, the continuous 
cohomology groups $H^i_S (F_n, {\Bbb Z}_p (m)) := H^i (G_S (F_n), {\Bbb Z}_p (m))$, $m \in {\Bbb Z}$, 
coincide with the \'etale cohomology groups $H^i_{\text{\'et}} ({\mathcal {O}}_{F_n} [1/S], {\Bbb Z}_p (m))$. 
We are interested in the ${\Bbb A}$-modules 
$H^i_{Iw, S} (F_\infty, {\Bbb Z}_p (m)) := \displaystyle\lim_{\underset{\text{cores}} {\longleftarrow} } 
H^i_S (F_n, {\Bbb Z}_p (m))$ ($H^i_{Iw}(\ldotp)$ for short if there is no ambiguity on $S$), to which can be attached invariants containing important arithmetical information.

For instance, if $G$ is {\it abelian} and the $\mu$-invariant associated to $F_\infty/F$ vanishes, it can be shown that, for any $m \equiv 0$ (mod 2), the initial ${\Bbb A}$ - Fitting ideal of $H^2_{Iw} (F_\infty, {\Bbb Z}_p (m))$ is given by the formula~:
\[ Fit_{\Bbb A} \bigl( H^2_{Iw} (F_\infty , {\Bbb Z}_p (m) \Bigl) = tw_m (I G_\infty \ldotp \theta_S) \eqno(1)\]
where $tw_m$ denotes the automorphism of the total ring of fractions $Q {\Bbb A}$ induced by the $m^{th}$-Iwasawa twist $\sigma \mapsto \kappa^m_{cyc}\  \sigma^{-1},$ $\kappa_{cyc}$ is the cyclotomic character, $I G_\infty$ is the augmentation ideal of ${\Bbb Z}_p [[G_\infty]]$ and $\theta_S \in Q\ {\Bbb A}$ is the Deligne-Ribet pseudo-measure associated with $G_\infty$ (\cite{Ng}, thms 3.1.2, 3.3.3). Note that for $m < 0$, we implicitly assume the validity of the $m^{th}$-twist of Leopoldt's conjecture; see proposition 3.2 below.

The identity (1) was shown by using the (abelian) Equivariant Main Conjecture (EMC for short) of Iwasawa theory in the formulation of Ritter and Weiss (\cite{RW1}, theorem 11). Here ``equivariant'' means that the Galois action of $G$ is taken into account. One salient feature of formula (1) is that it allows to prove by descent the $p$-part of the Coates-Sinnott conjecture, as well as a weak form of the $p$-part of Brumer's conjecture (\cite{Ng}, thms 4.3, 5.2). On the same subject, let us report the approach of Burns and Greither (\cite{BG2}, theorems 3.1, 5.1 and corollary 2) using the Iwasawa theory of complexes initiated by Kato and subsequently extended by Nekovar in \cite{Ne}. See also \cite{GP}.

After the recent proof of the non commutative EMC (\cite{Ka}, \cite{RW2}), no doubt that the above results 
could be extended to the case where $G$ is non abelian. But in this article, we are mainly interested in the 
odd twists, $m \equiv 1$ (mod 2), which are not a priori covered by the EMC. 
Since there are so many ``main conjectures'' floating around, a short explanation is in order. Let us come 
back to the classical Main Conjecture, or Wiles' theorem (WMC for short) and take $G = (1)$ for simplicity. 
The Galois group $\mathfrak {X}_\infty$ over $F_\infty$ of the maximal abelian $(p)$-ramified pro-$p$-extension of
 $F_\infty$ is a $\Lambda$-torsion module because $F$ is totally real, and the $\Lambda$-characteristic series of 
$\mathfrak {X}_\infty$ is precisely related by the WMC to the Deligne-Ribet pseudo-measure attached to the maximal
 pro-$p$-quotient of $G_S^{ab} (F)$ (where $S = S_\infty \cup S_p $). If we allow - just for this discussion - the base field $F$ to be a CM-field, then $tor_\Lambda \mathfrak{X}_\infty = {\goth X }^{+}_\infty$ 
by the weak Leopoldt conjecture, and it is related by Spiegelung (reflection theorems) to the 
``minus part'' $X_\infty^{-}$ of the Galois group over $F_\infty$ of the maximal abelian unramified 
pro-$p$-extension of $F_\infty$, so that we know the $\Lambda$-characteristic series of $X_\infty^{-}$ in terms of 
$L_p$-functions. However our intended study of odd twists actually concerns the ``plus part'' $X^{+}_\infty$, at least because if $\zeta_p \in F$, then $X'_\infty (m-1)$ is contained in $H^2_{Iw} (F_\infty, {\Bbb Z}_p (m))$ (see propos. \ref{3.2} below), where $X'_\infty$ is obtained from $X_\infty$ by adding the condition that all $(p)$-primes (hence all finite primes) must be totally split. A related problem is Greenberg's celebrated conjecture - a ``reasonable'' generalization of Vandiver's - which asserts the finiteness of $X^{+}_\infty$ (or equivalently of $X_\infty^{'+}$). Not much is known on the plus parts. Let us recall some results in the case where $k = {\Bbb Q},$ $F$ is totally real and 
$G$ is abelian~:
\begin{itemize}
\item it is well known that the WMC is equivalent to the so-called ``Gras type'' equality (because it implies the Gras conjecture)~:
\[char_\Lambda\ X_\infty = \ char_\Lambda (\overline U_\infty/\overline C_\infty) \eqno(2)\]
where $\overline U_\infty$ (resp. $\overline C_\infty$) denotes the inverse limit (w.r.t. norms) of the $p$-completions $\overline U_n$ (resp. $\overline C_n$) of the groups of units (resp. circular units) along the cyclotomic tower of $F$. But the right hand side $\Lambda$-characteristic series is not known. Besides, this Gras type equality comes from the multiplicativity of $ char_\Lambda (\ldotp)$ in the exact sequence of class-field theory relative to inertia~:
\[0 \to \overline U_\infty/\overline C_\infty \to {\mathcal{U}}_\infty/\overline C_\infty \to {\goth X }_\infty \to X_\infty \to 0,\] where ${\mathcal{U} }_\infty$ is the semi-local analogue of $\overline U_\infty$. But when passing from $\Lambda$ to ${\Bbb A},$ no straightforward equivariant generalization (of ${\Bbb A}$-characteristic series of modules and their multiplicativity) is known.
\item  Galois annihilators of $X'_\infty$ have been explicitly computed in \cite{NN} and \cite{So}, as well as Fitting ideals in the semi-simple case in \cite{NN}.
\end{itemize}
In this context, the main result of this paper will be an {\it explicit equivariant generalization of the Gras type equality} (2) {\it for all odd twists} (see thm  \ref{3.7} below).

The proof will proceed in essentially two stages~:
\begin{itemize}
 \item use a ``limit theorem'' in the style of Burns and Greither (\cite{BG1}, thm 6.1 ; this is also an EMC, but we don't call it so for fear of overload), but expressed in the framework of the Iwasawa theory of perfect torsion complexes as in \cite{W}, to relate the ${\Bbb A}$-determinant of $H^2_{Iw} (F_\infty, {\Bbb Z}_p (m))$, $m$ odd, to that of a suitable quotient of $H^1_{Iw} (F_\infty, {\Bbb Z}_p (m))$
\item use an axiomatization of a method originally introduced by Kraft and Schoof for real quadratic fields (\cite{KS}) to compute explicitly the latter determinant (which will be actually an ${\Bbb A}$-initial Fitting ideal).
\end{itemize}
\section{Generators and relations, and Fitting ideals}
We study in this section an axiomatic method for describing certain (initial) Fitting ideals by generators and relations. It was first introduced by \cite{KS} for quadratic real fields, and subsequently applied by \cite{So} to the cyclotomic field ${\Bbb Q}(\zeta_p)^+$.
\subsection{General case}
Since the process is purely algebraic, we can relax here the assumptions on $k$ and $F$ imposed in the introduction. So $F/k$ {\it will just be a Galois extension with group $G$}, and the usual Galois picture will be~:

\[\xymatrix@=22pt{ & F_\infty \\
k_\infty\ar@{-}[ur]^-H & F_n \ar@{-}[u]_-{\Gamma^{p^n}} \\
k_\infty \cap F\ar@{-}[u]\ar@{-}[r]& F\ar@{-}[u]^-{\Gamma_n} \\
k\ar@{-}[u]\ar@{-}[ru]_-{G_0=G}\ar@/_4pc/ @{-}[uru]_-{G_n} & \\
}
\]

Note that ${\Bbb A} = \Lambda [H]$ is equal to $\Lambda [G]$ if and only if $k_\infty \cap F = k$. Put 
$R_n = {\Bbb Z}_p [G_n]$, $R_{n,a} = {\Bbb Z}/p^a\ [G_n]$.

{\it Hypothesis~:} We are given a projective (w.r.t. norms) system of $R_n$-modules $V_n$ which are ${\Bbb Z}_p$-free of finite type, as well as a projective subsystem $W_n \subset V_n$ such that each $W_n$ is $R_n$-free of rank 1. In other words, there is a norm coherent system $\eta = (\eta_n),\ \eta_n \in V_n,$ such that $W_n = R_n \ldotp \eta_n$ for all 
$n \geq 0$. Without any originality, a pair $(V_n, W_n)_{n \geq 0}$ as above will be called {\it admissible}.

At the time being, we don't worry about the existence of such systems $(W_n, V_n)_n$. 
Arithmetical examples will be given later in sections 2 and 3.

{\it Goal~:} denoting by $B_n$ the quotient $V_n/W_n$, describe the module $(t\, B_n)^*$ 
(where $t(\ldotp)$ means ${\Bbb Z}_p$-torsion) by generators and relations.

In the sequel, for two left modules $X, Y$, the Galois action on $\Hom(X, Y)$ will always be defined by ${}^\sigma f(x) = \sigma (f(\sigma^{-1} x))$. The Pontryagin dual of $X$ will be denoted by $X^\ast.$

\begin{prop}\label{1.1}
For any $n \geq 0$, \[(t\, B_n)^\ast \simeq R_n/{\cal D }_n,\ \text{where}\ {\cal D }_n = \{ \displaystyle\sum_{\sigma \in G_n} f(\sigma^{-1} \eta_n) \ldotp \sigma \ ;\ f \in \Hom\, (V_n, {\Bbb Z}_p) \}\ .\]
\end{prop}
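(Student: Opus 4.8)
The plan is to obtain the isomorphism by dualizing, over ${\Bbb Z}_p$, the defining exact sequence $0 \to W_n \to V_n \to B_n \to 0$ and then using that $W_n$ is $R_n$-free of rank one. First I would apply $\Hom_{{\Bbb Z}_p}(-,{\Bbb Z}_p)$ to that sequence. Since $V_n$ is ${\Bbb Z}_p$-free of finite type, $\text{Ext}^1_{{\Bbb Z}_p}(V_n,{\Bbb Z}_p)=0$, so (for the Galois action ${}^\sigma f(x)=\sigma(f(\sigma^{-1}x))$ on all the Hom's) one gets an exact sequence of $R_n$-modules
\[ \Hom_{{\Bbb Z}_p}(V_n,{\Bbb Z}_p)\ \xrightarrow{\ \rho\ }\ \Hom_{{\Bbb Z}_p}(W_n,{\Bbb Z}_p)\ \longrightarrow\ \text{Ext}^1_{{\Bbb Z}_p}(B_n,{\Bbb Z}_p)\ \longrightarrow\ 0, \]
where $\rho$ is restriction of linear forms; hence $\text{Ext}^1_{{\Bbb Z}_p}(B_n,{\Bbb Z}_p)\simeq\Hom_{{\Bbb Z}_p}(W_n,{\Bbb Z}_p)/\mathrm{im}\,\rho$ as $R_n$-modules.

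Next I would identify both ends. On the left, because $B_n$ is finitely generated over ${\Bbb Z}_p$ the canonical sequence $0\to t\,B_n\to B_n\to B_n/t\,B_n\to 0$ has ${\Bbb Z}_p$-free cokernel, so it induces a natural (hence $R_n$-equivariant) isomorphism $\text{Ext}^1_{{\Bbb Z}_p}(B_n,{\Bbb Z}_p)\simeq\text{Ext}^1_{{\Bbb Z}_p}(t\,B_n,{\Bbb Z}_p)$; and since $t\,B_n$ is finite, the sequence $0\to{\Bbb Z}_p\to{\Bbb Q}_p\to{\Bbb Q}_p/{\Bbb Z}_p\to 0$ gives a natural isomorphism $\text{Ext}^1_{{\Bbb Z}_p}(t\,B_n,{\Bbb Z}_p)\simeq\Hom_{{\Bbb Z}_p}(t\,B_n,{\Bbb Q}_p/{\Bbb Z}_p)=(t\,B_n)^\ast$. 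On the right, the hypothesis $W_n=R_n\cdot\eta_n$ gives the $R_n$-isomorphism $R_n\xrightarrow{\,\sim\,}W_n$, $\lambda\mapsto\lambda\eta_n$, so $\Hom_{{\Bbb Z}_p}(W_n,{\Bbb Z}_p)\simeq\Hom_{{\Bbb Z}_p}(R_n,{\Bbb Z}_p)$, and the latter is again $R_n$-free of rank one; unwinding the action, evaluation at $\eta_n$ in each ``coordinate'' $\sigma\in G_n$ yields the explicit $R_n$-isomorphism $\Hom_{{\Bbb Z}_p}(W_n,{\Bbb Z}_p)\xrightarrow{\,\sim\,}R_n$, $g\mapsto\sum_{\sigma\in G_n}g(\sigma^{-1}\eta_n)\,\sigma$.

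Finally I would chase $\rho$ through these identifications: $f\in\Hom_{{\Bbb Z}_p}(V_n,{\Bbb Z}_p)$ goes to $f|_{W_n}$, which corresponds to $\sum_{\sigma\in G_n}f(\sigma^{-1}\eta_n)\,\sigma$; so $\mathrm{im}\,\rho$ is exactly ${\cal D}_n$, and the target quotient becomes $R_n/{\cal D}_n$. Composing all the displayed isomorphisms then gives $(t\,B_n)^\ast\simeq R_n/{\cal D}_n$. I expect the only genuinely delicate step — as opposed to the formal homological algebra, which is routine — to be the verification that every one of these maps is $R_n$-equivariant for the stated action, and in particular that ``evaluation at $\eta_n$'' realizes $\Hom_{{\Bbb Z}_p}(W_n,{\Bbb Z}_p)$ as the regular $R_n$-module and carries $\rho$ onto the set ${\cal D}_n$ in precisely the displayed form; this amounts to a careful bookkeeping of the convention ${}^\sigma f(x)=\sigma(f(\sigma^{-1}x))$ and a matching choice of normalization for the self-duality $\Hom_{{\Bbb Z}_p}(R_n,{\Bbb Z}_p)\simeq R_n$.
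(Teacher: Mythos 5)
Your proof is correct, and it takes a genuinely different route from the paper's. The paper follows Kraft--Schoof: it applies the snake lemma to the $p^a$-power map to get $0\to B_n[p^a]\to W_n/p^a\to V_n/p^a\to B_n/p^a\to 0$, applies $\Hom_{R_n}(\,\cdot\,,R_{n,a})$ with $R_{n,a}={\Bbb Z}/p^a[G_n]$, uses that $R_{n,a}$ is Gorenstein so that this functor is exact and computes Pontryagin duals, then passes to the inverse limit over $a$ to obtain $\Hom_{R_n}(V_n,R_n)\to\Hom_{R_n}(W_n,R_n)\to(t B_n)^\ast\to 0$, and concludes via $W_n=R_n\eta_n$ together with the adjunction $\Hom_{{\Bbb Z}_p}(M,{\Bbb Z}_p)\simeq\Hom_{R_n}(M,R_n)$, $f\mapsto\sum_{\sigma}f(\sigma^{-1}\,\cdot\,)\sigma$. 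You instead dualize $0\to W_n\to V_n\to B_n\to 0$ over ${\Bbb Z}_p$ in one stroke and read off $(t B_n)^\ast$ as $\mathrm{Ext}^1_{{\Bbb Z}_p}(B_n,{\Bbb Z}_p)$; this avoids both the Gorenstein input and the limit over $a$, and your map $g\mapsto\sum_{\sigma}g(\sigma^{-1}\eta_n)\sigma$ is precisely the composite of the paper's two identifications, so both arguments terminate in the same presentation $\Hom_{{\Bbb Z}_p}(V_n,{\Bbb Z}_p)\to R_n\to (t B_n)^\ast\to 0$ with cokernel $R_n/\mathcal{D}_n$. What the paper's mod-$p^a$ route buys is the finite-level output ($B_n^\ast/p^a$ as an explicit cokernel), which is what feeds the Kummer-theoretic description of $\overline{\mathcal{D}}_n$ in \S 1.2; what yours buys is economy. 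On the one point you flag as delicate: with the convention ${}^\sigma f(x)=\sigma(f(\sigma^{-1}x))$ and trivial action on ${\Bbb Z}_p$, the normalization $g\mapsto\sum_{\sigma}g(\sigma^{-1}\eta_n)\sigma$ intertwines the contragredient action with multiplication by $\sigma^{-1}$ (the strictly equivariant choice would be $g\mapsto\sum_{\sigma}g(\sigma\eta_n)\sigma$), so your identification inverts the Galois action; but this is exactly the bookkeeping already carried by the paper's ``classical isomorphism'' (and by the $\#$ that surfaces later, e.g.\ in proposition \ref{2.2}), $\mathcal{D}_n$ is an ideal in either normalization, and the statement of the proposition is unaffected.
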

\begin{proof}
We follow essentially the argument of \cite{KS}, thm 2.4. For any $a \geq 0$, the snake lemma applied to the $p^a$-th power map yields an exact sequence~:
$$0 \to B_n [p^a] \to W_n/p^a \to V_n/p^a \to B_n/p^a \to 0$$
(the injectivity on the left is due to the fact that $V_n$ is ${\Bbb Z}_p$-free).

Applying the functor $\Hom_{G_n} (\ldotp, R_{n, a}),$ we get another exact sequence of $R_{n,a}$-modules~:
$$ \to \Hom_{G_n}\bigl( V_n, R_{n, a} \bigl) \to \Hom_{G_n}\bigl( W_n, R_{n, a} \bigl) \to \ \Hom_{G_n}\bigl( B_n [p^a], R_{n, a} \bigl) \to$$
But $R_{n, a}$ is a Gorenstein ring, which means that $R^\ast_{n, a}$ is a free $R_{n, a}$-module of rank 1, hence, for any finite $R_{n, a}$-module $M$, the canonical isomorphism \linebreak $\Hom_{R_n}\bigl(M, R^\ast_{n, a}\bigl) {\displaystyle\buildrel\sim\over\to} M^\ast$ gives rise to an $R_{n, a}$-isomorphism 
$\Hom_{R_n} \bigl( M, R_{n, a} \bigl) {\displaystyle\buildrel\sim\over\to} M^\ast$, $f \mapsto \psi \circ f$, 
where $\psi$ is a chosen $R_{n, a}$-generator $R_{n, a} \to {\Bbb Q}_p/{\Bbb Z}_p$. It follows that the functor 
$\Hom_{R_n} \bigl( \ldotp, R_{n, a} \bigl)$ is exact and we have
\[ \to \Hom_{R_n} \bigl( V_n, R_{n, a} \bigl) \to \ \Hom_{R_n} \bigl( W_a, R_{n, a} \bigl) \to B^\ast_n / p^a \to 0\]
Taking $\displaystyle \lim_{\longleftarrow\atop a}$, we derive an exact sequence~:
\[\to \Hom_{R_n} ( V_n, R_n ) {\buildrel res \over \longrightarrow} \Hom_{R_n} ( W_n, R_n) \to (t B_n)^\ast \ 0\ .\]
But $W_n = R_n \ldotp \eta_n,$ hence an element $f \in \Hom_{R_n} (W_n, R_n)$ is determined by the value $f(\eta_n)$, i.e. $\Hom_{R_n} (W_n, R_n) {\displaystyle\buildrel \sim \over \rightarrow} R_n$, $f \mapsto f(\eta_n)$, and the above exact sequence becomes~: $\Hom_{R_n} (V_n, R_n) \to R_n \to (t B_n)^\ast \to 0$.

In other words, $(t B_n)^\ast \simeq R_n / \{ f(\eta_n)\, ;\, f \in \Hom_{G_n} (V_n, R_n) \}$. The classical
isomorphism $\Hom_{{\Bbb Z}_p} (M, {\Bbb Z}_p) {\displaystyle\buildrel \sim \over \to} \Hom_{R_n} (M, R_n)$, 
$f \mapsto F$ such that $F(m) = \displaystyle\sum_{\sigma \in G_n} f(\sigma^{-1} m) \ldotp \sigma$, $\forall\, m \in M$, gives the desired result. \end{proof}
\begin{cor}\label{1.2}
$Ann_{R_n} \bigl( (t B_n)^\ast \bigl) = Fit_{R_n} \bigl( (t B_n)^\ast \bigl) = {\cal D }_n$
\end{cor}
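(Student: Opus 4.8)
The plan is to deduce Corollary~\ref{1.2} directly from the generators-and-relations description furnished by Proposition~\ref{1.1}. Write $M = (tB_n)^\ast$, so that Proposition~\ref{1.1} gives $M \simeq R_n/\mathcal{D}_n$ as $R_n$-modules, where $\mathcal{D}_n$ is the ideal generated by the elements $\sum_{\sigma \in G_n} f(\sigma^{-1}\eta_n)\cdot\sigma$ as $f$ ranges over $\Hom(V_n,{\Bbb Z}_p)$. Since $M$ is a \emph{cyclic} $R_n$-module presented as a quotient $R_n/\mathcal{D}_n$, the two claimed equalities are instances of general facts about cyclic modules over a commutative ring.

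First I would record that $\mathcal{D}_n$ is genuinely an ideal of $R_n$: it is visibly an $R_n$-submodule, being the image of the $R_n$-linear map $\Hom_{R_n}(V_n,R_n)\to R_n$, $F\mapsto F(\eta_n)$, that appeared in the proof of Proposition~\ref{1.1} (here one uses that $R_n$ is commutative, so that $\Hom_{R_n}(V_n,R_n)$ is an $R_n$-module and evaluation at $\eta_n$ is $R_n$-linear). Then $\mathrm{Ann}_{R_n}(R_n/\mathcal{D}_n) = \mathcal{D}_n$: an element $x\in R_n$ kills $R_n/\mathcal{D}_n$ iff $x\cdot 1 \in \mathcal{D}_n$ iff $x\in\mathcal{D}_n$. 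For the Fitting ideal, recall that for a cyclic module $R_n/\mathfrak{a}$ with $\mathfrak{a}$ generated by a family $(d_i)_i$, the presentation matrix may be taken to be the row vector $(d_i)_i$ mapping onto $R_n$, whence $\mathrm{Fit}_{R_n}(R_n/\mathfrak{a})$ is the ideal generated by the entries, i.e. $\mathfrak{a}$ itself. Applying this with $\mathfrak{a} = \mathcal{D}_n$ yields $\mathrm{Fit}_{R_n}(M) = \mathcal{D}_n$ as well.

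This is essentially a formality, so there is no real obstacle; the only point requiring a modicum of care is that the identifications must respect the $R_n$-module (not merely ${\Bbb Z}_p$-module) structure, which is exactly what was already arranged in the final paragraph of the proof of Proposition~\ref{1.1} via the classical isomorphism $\Hom_{{\Bbb Z}_p}(M,{\Bbb Z}_p)\buildrel\sim\over\to\Hom_{R_n}(M,R_n)$. Granting that, one writes:
\begin{proof}
By Proposition~\ref{1.1}, $(tB_n)^\ast$ is the cyclic $R_n$-module $R_n/\mathcal{D}_n$. For any commutative ring $R$ and any ideal $\mathfrak{a}\subseteq R$ one has $\mathrm{Ann}_R(R/\mathfrak{a}) = \mathfrak{a}$, and, since the row vector of a generating family of $\mathfrak{a}$ is a presentation matrix of $R/\mathfrak{a}$, also $\mathrm{Fit}_R(R/\mathfrak{a}) = \mathfrak{a}$. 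Apply this with $R = R_n$ and $\mathfrak{a} = \mathcal{D}_n$.
\end{proof}
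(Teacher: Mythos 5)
Your proof is correct and follows essentially the same route as the paper: Proposition~\ref{1.1} exhibits $(tB_n)^\ast$ as the cyclic module $R_n/\mathcal{D}_n$, and then the equalities $\mathrm{Ann}_{R_n} = \mathrm{Fit}_{R_n} = \mathcal{D}_n$ are the standard facts about a monogeneous module with that presentation, which is exactly the paper's (even terser) argument.
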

\begin{proof} The proposition \ref{1.1} shows at the same time that $(t B_n)^\ast$ is \linebreak $R_n$-monogeneous, hence its $R_n$-annihilator and $R_n$-Fitting ideal coincide, and that $Fit_{R_n} \bigl( (t B_n)^\ast\bigl) = {\cal D }_n$. \end{proof}
\begin{defi}\label{1.3} In the sequel, we shall denote by ${\cal D } (F_\infty)$ (or ${\cal D }_\infty$ for short) the projective limit ${\displaystyle\lim_\leftarrow}\ {\cal D }_n$ w.r.t. norms (the detailed transition maps can be found e.g. in \cite{So}, proposition 1). Of course ${\cal D }_\infty$ depends on the admissible pair $(V_n, W_n)_{n \geq 0}$. The corollary \ref{1.2} shows that $Ann_{\Bbb A} ({\displaystyle\lim_\leftarrow}\ (t B_n)^\ast) = Fit_{\Bbb A} ({\displaystyle\lim_\leftarrow}\ (t B_n)^\ast) = {\cal D }_\infty$.
\end{defi}

\subsection{A kummerian description}

Let $E = F(\zeta_p)$ and $\Delta = Gal\ (E/F)$. Fix a norm coherent system $\boldsymbol{\zeta} = \bigl( \zeta_{p^n}\bigl)_{n \geq 0}$ of generators of the groups $\mu_{p^n}$ of $p^{nth}$ roots of unity. Attach to $E_\infty/E$ the following objects~: $U_n$ (resp. $U'_n)$ = group of units (resp. ($p$-units) of $E_n$
\[\overline U_n = \overline U_n \otimes {\Bbb Z}_p \ ,\ \overline U'_n = U'_n \otimes {\Bbb Z}_p\]
${\goth X }_n$ = Galois group over $E_n$ of the maximal abelian $(p)$-ramified pro-$p$-extension of $E_n$. At the level of $E_n$, we have the Kummer pairing~:
\[H^1 \bigl( G_S(E_n), \mu_{p^n} \bigl) \times {\goth X }_n/p^n \rightarrow \mu_{p^n},\,  
(x, \rho) \mapsto \Bigl( x^{{1 \over p^n}} \Bigl)^{\rho-1}\ .\]
Let $\langle \ldotp, \ldotp \rangle_n\ :\ \Hom \bigl( {\goth X }_n, {\Bbb Z}/p^n \bigl) \times {\goth X }_n / p^n
\rightarrow  {\Bbb Z}/p^n$ be the pairing defined by $\bigl( x^{{1 \over p^n}} \bigl)^{\rho -1} = 
\xi_{p^n}^{\langle x, \rho \rangle_n}$. Take $V^E_n$ to be $\overline U_n/tors$, or $\overline U'_n/tors$, or more generally a free ${\Bbb Z}_p$-module such that $V^E_n/p^n \hookrightarrow\ \Hom\ \bigl( G_S(E_n), \mu_{p^n} \bigl)$. In this subsection, we relax the condition of Galois freeness on $W^E_n$, which we assume only to be monogeneous~: 
$W^E_n = {\Bbb Z} [J_n] \ldotp \eta_n$, where $J_n = Gal (E_n/k)$.

Define $\overline{\cal D }_n (E) = \lbrace \displaystyle\sum_{\sigma \in J_n} f (\sigma^{-1} \eta_n) \ldotp \sigma$ ; 
$f \in (V_n/p^n)^\ast \rbrace = \lbrace \displaystyle\sum_{\sigma \in J_n} \langle \sigma^{-1} \ldotp \eta_n, \rho\rangle_n \sigma\ ;\ \rho \in {\goth X }_{n}/p^n \rbrace = \lbrace \displaystyle\sum_{\sigma \in G_n} \langle \eta_n, \sigma \ldotp \rho\rangle_n \, \kappa^{-1}_{cyc} (\sigma) \sigma\, ;\, \rho \in {\goth X }_n/p^n \rbrace$ (the second equality comes from the Kummer pairing just recalled; the third from a functorial property of this pairing). Recall that ${\cal D }_n (E) := \lbrace \displaystyle\sum_{\sigma \in J_n} f(\sigma^{-1} \eta_n) \ldotp \sigma\, ;\, 
f \in \Hom (V^E_n, {\Bbb Z}_p) \rbrace$.
\begin{prop}\label{1.4}
With the pair $(V^E_n, W^E_n)_{n \geq 0}$ chosen as above
\[{\cal D } (E_\infty) := {\displaystyle\lim_\leftarrow}\ {\cal D }_n (E) = {\displaystyle\lim_\leftarrow}\ \overline{\cal D }_n (E)\] is monogeneous as ${\Bbb Z}_p [[ \Gal(E_\infty / {\Bbb Q})]]$-module.
\end{prop}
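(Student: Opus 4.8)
The plan is to reduce the statement to Proposition~\ref{1.1} (and Corollary~\ref{1.2}), which already shows that each $(tB_n^E)^\ast \simeq R_n^{J}/{\cal D}_n(E)$ is monogeneous over $R_n^J = {\Bbb Z}_p[J_n]$ — except that here $W_n^E$ is only assumed monogeneous, not $R_n^J$-free, so the clean computation of $\Hom_{R_n}(W_n,R_n)\cong R_n$ used in the proof of Proposition~\ref{1.1} is no longer available. So the first thing I would do is pass to the level $n=0$ of the $\Delta$-tower and use the $p$-adic decomposition: since $\Delta=\Gal(E/F)$ has order prime to $p$ (as $[E:F]\mid p-1$), the group ring $R_n^J$ decomposes as a product over the characters of $\Delta$, and on each component the relevant modules become $\Lambda[G_\infty/\Delta]$-modules to which the free version applies. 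The key point is that monogeneity is preserved under taking a quotient of a monogeneous module and under projective limits of a coherent system of cyclic modules, so it suffices to exhibit, at each finite level, a single generator of ${\cal D}_n(E)$ (resp. $\overline{\cal D}_n(E)$) as a $J_n$-module and check that the norm maps send generator to generator.

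The second step is to identify the projective limits ${\displaystyle\lim_\leftarrow}\,{\cal D}_n(E)$ and ${\displaystyle\lim_\leftarrow}\,\overline{\cal D}_n(E)$. The three descriptions of $\overline{\cal D}_n(E)$ given just before the statement already express it via the Kummer pairing in terms of ${\goth X}_n/p^n$; taking $\displaystyle\lim_\leftarrow$ over $n$ (the transition maps being those of \cite{So}, Prop.~1, compatible with the pairings by the stated functorial property) turns the coefficient module $\Hom(V_n^E,{\Bbb Z}/p^n)$ into $\Hom(V_\infty^E,{\Bbb Z}_p)$-type data and ${\goth X}_n/p^n$ into ${\goth X}_\infty$, so the limit of the $\overline{\cal D}_n(E)$ matches the limit of the ${\cal D}_n(E)$ by passing to the limit in the isomorphisms of Proposition~\ref{1.1}. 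That both limits are the same ${\Bbb A}$-module is then essentially formal once one checks the transition maps agree, and the monogeneity over ${\Bbb Z}_p[[\Gal(E_\infty/{\Bbb Q})]]$ follows from the monogeneity at each finite level together with compactness (an inverse limit of nonempty sets of generators, cut out by the surjections, is nonempty; alternatively, topological Nakayama over the complete local components of the group ring).

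The main obstacle I expect is precisely the failure of Gorenstein/freeness at the level of $W_n^E$: in Proposition~\ref{1.1} the identification $\Hom_{R_n}(W_n,R_n)\xrightarrow{\sim}R_n$ crucially used $W_n=R_n\cdot\eta_n$ \emph{free}, and replacing ``free'' by ``monogeneous'' means $\Hom_{R_n}(W_n^E,R_n)$ need no longer be free, so one cannot directly conclude that the cokernel is cyclic. The fix is to work $\Delta$-component by $\Delta$-component as above — on each $\chi$-part, $W_n^E$ \emph{is} free of rank one over the corresponding (now group-ring-over-$\Lambda$-modulo-$\Delta$) ring, because a monogeneous module over a product of local rings that is faithful is free on each factor where it is nonzero — and then reassemble. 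Handling the characters $\chi$ on which the $\chi$-part of $W_n^E$ vanishes (if any) is a minor bookkeeping point since there the corresponding Fitting ideal is the unit ideal and contributes nothing to monogeneity.
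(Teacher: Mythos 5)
There is a genuine gap, and it lies at the heart of the statement: your proposal never produces a generator of $\mathcal{D}_n(E)$ or of $\mathcal{D}(E_\infty)$. Proposition \ref{1.1} (and Corollary \ref{1.2}) show that the \emph{quotient} $R_n/\mathcal{D}_n$ is isomorphic to $(tB_n)^\ast$, i.e.\ that this quotient is cyclic --- which is trivially true of any quotient of the ring --- and that $\mathcal{D}_n$ is the Fitting ideal (equivalently the annihilator) of a cyclic module. But Proposition \ref{1.4} asserts something quite different, namely that the \emph{ideal} $\mathcal{D}(E_\infty)$ itself is generated by one element over $\mathbb{Z}_p[[\Gal(E_\infty/\mathbb{Q})]]$, and annihilators or Fitting ideals of cyclic modules over group rings such as $\mathbb{Z}_p[J_n]$ (or their local $\chi$-components, whose $p$-part need not be cyclic) are in general not principal. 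So your reduction ``$\Delta$-decompose, recover freeness of $W_n^E$ on each $\chi$-part, apply the free version of Proposition \ref{1.1}'' cannot deliver what is needed: even in the free case Proposition \ref{1.1} says nothing about $\mathcal{D}_n$ being monogeneous, and the subsequent limit/compactness step therefore has no finite-level input to propagate. (The identification $\lim_\leftarrow \mathcal{D}_n(E)=\lim_\leftarrow\overline{\mathcal{D}}_n(E)$, which you treat as formal, is indeed the easy part, via $\Hom(V^E_n,\mathbb{Z}_p)\simeq (V^E_n\otimes\mathbb{Q}_p/\mathbb{Z}_p)^\ast$.)

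The missing idea is Kummer-theoretic, and it is exactly why the author rewrote $\overline{\mathcal{D}}_n(E)$ through the pairing $\langle\,\cdot\,,\,\cdot\,\rangle_n$: in the sum $\sum_{\sigma}\langle\eta_n,\sigma\cdot\rho\rangle_n\,\kappa_{cyc}^{-1}(\sigma)\sigma$ the value of the pairing depends only on the image of $\rho$ in the \emph{cyclic} group $\Gal\bigl(E_n\bigl(\eta_n^{1/p^n}\bigr)/E_n\bigr)$; choosing a generator $\tau_n$ of this group, every element of $\overline{\mathcal{D}}_n(E)$ is a multiple of the single element $\sum_{\sigma}\langle\eta_n,\sigma\cdot\tau_n\rangle_n\,\kappa_{cyc}^{-1}(\sigma)\sigma$. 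Passing to the infinite level, one considers the extension of $E_\infty$ obtained by adjoining all $p^n$-th roots of all the $\eta_n$'s \emph{and their conjugates} under $\Gal(E_\infty/\mathbb{Q})$; the semilinearity of the pairing with respect to this Galois action is precisely what makes the limit monogeneous over the full algebra $\mathbb{Z}_p[[\Gal(E_\infty/\mathbb{Q})]]$, not merely over the cyclotomic direction. Without this mechanism (a single pro-cyclic Kummer group mapping onto the whole system, compatibly with norms), neither the character decomposition of $\Delta$ nor topological Nakayama gives monogeneity, so the proposal as written does not prove the proposition.
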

\begin{proof} Since $\Hom\ (V^E_n, {\Bbb Z}_p) \simeq \bigl(V^E_n \otimes {\Bbb Q}_p/{\Bbb Z}_p \bigl)^\ast$, the equality ${\displaystyle\lim_\leftarrow}\ {\cal D }_n (E) = {\displaystyle\lim_\leftarrow}\ \overline{\cal D }_n(E)$ is straightforward. Notice next that in the sum $\displaystyle\sum_{\sigma \in J_n}  \langle \eta_n, \sigma \ldotp \rho \rangle_n\ \kappa_{cyc}^{-1} (\sigma) \sigma$, the value of the pairing $\langle \eta_n, \sigma \ldotp \rho \rangle_n$ does not depend on $\rho$, only on the image of $\rho$ in the cyclic group 
$\Gal \Bigl( E_n \bigl( \eta_n^{{1 \over p^n}}\bigl)/E_n\Bigl)$. Choosing a generator $\tau_n$ of this group, we see that the sum above is a multiple of $\displaystyle\sum_{\sigma \in J_n} \langle \eta_n, \sigma \ldotp \tau_n\rangle_n\, \kappa^{-1}_{cyc} (\sigma) \sigma$. By considering the extension of $E_\infty$ obtained by adjoining all 
$p^n$-th roots of all the $\eta_n's$ and their conjugates under the action of $Gal\ (E_\infty/{\Bbb Q})$,
we get the desired monogeneity result. 
\end{proof}
This proposition will be applied at the end of section 2 by cutting out  ${\cal D } (E_\infty)$ by characters of $\Gal(E/F)$.

\vspace{.3cm}

As usually happens, the ``new'' object ${\cal D }_\infty$ appears afterwards to be not so new. Some previous occurrences must be recorded~:
\begin{itemize}
 \item  for $ k = {\Bbb Q}$ and $E = {\Bbb Q}(\zeta_p)$ and for a special choice of 
\({\displaystyle\boldsymbol{\eta}}\), ${\cal D }_\infty$ plays an important role in Ihara's theory of universal power series for Jacobi sums (see \cite{IKY}). In \cite{NN}, $\S$ 4.1, ${\cal D }_\infty$ is interpreted as a certain module of ``$p$-adic Gauss sums'', 
and both \cite{NN} and \cite{So} show that ${\displaystyle(\mathcal D_\infty)^{\#}}$ is the ${\Bbb A}$-initial Fitting ideal of $X_\infty^{'+},$ where the sign $(\ldotp)^\#$ means inverting Galois action.
\item in exactly the same setting, ${\cal D }_\infty$ appears on the Galois side of Sharifi's conjecture on the two-variable $L_p$-function of Mazur-Kitagawa (\cite{Sh}, propos. 6.2). Note that a weak form of this conjecture has been recently proved for all fields ${\Bbb Q} (\zeta_{p^n})$ by Fukaya and Kato (\cite{FK2}).
\item for $k$ totally real and $E/k$ abelian, containing $\zeta_p$, and for a special choice of $\eta$, $(\mathcal {D}_\infty )^{\#}$ is shown in \cite{NN} and \cite{So} to annihilate $X_\infty^{' +}$. Moreover, in the semi-simple case $(p \mid\!\!\!\!\!/ \vert G \vert)$, thms 3.4.2 and 5.3.2 of \cite{NN} assert that for any odd character $\psi$ of $G$, the $\psi$-part $((\mathcal {D}_\infty )^{\#})^\psi$ is isomorphic to the Fitting ideal of $(X'_\infty)^{\psi^\ast}$, where $\psi^\ast = \omega \psi^{-1}$ denotes the ``mirror'' of $\psi,$ $\omega$ being the Teichm\"uller character.
\item for $k/{\Bbb Q}$ abelian and $F = k (\zeta_p),$ ${\cal D }_\infty$ appears prominently in the explicit reciprocity law of Coleman, generalized by Perrin-Riou (\cite{Fl} $\S$ 1.3; \cite{PR}, thm 4.3.2). Let us just recall the starting point in \cite{PR}~: denoting \linebreak $\displaystyle\mathop\oplus_{v \mid p} H^1 (E_{n,v}, {\Bbb Z}_p (m))$ by $H^1_{s\ell} (E_n, {\Bbb Z}_p(m))$ and ${\displaystyle\lim_\leftarrow}\ H^1_{s\ell} (E_n, {\Bbb Z}_p (m))$ by \linebreak $H^1_{Iw, s\ell} (E_\infty, {\Bbb Z}_p (m)),$ Perrin-Riou constructs a ``cup-product'' at the infinite level $H^1_{Iw, s\ell} (E_\infty, {\Bbb Z}_p (1)) \times H^1_{Iw, s\ell} (E_\infty, {\Bbb Z}_p) {\displaystyle\mathop\rightarrow^\cup} ({\cal O }_E \otimes {\Bbb Z}_p) [[G_\infty]]$ such that, writing $\pi_n$ for the natural projection $({\cal O }_F \otimes  {\Bbb Z}_p)\, [[G_\infty]]) \to {\Bbb Z}_p [G_n],
$, $\pi_n ({\bf x} \cup {\bf y}) = \displaystyle\sum_{\sigma \in G_n} (\sigma^{-1} x_n \cup y_n) \ldotp \sigma$ (recall that 
the cup-product 
at 
finite levels does not commute with corestriction).
\end{itemize}
\section{Semi-simple example}

In this section, to illustrate the ``Gras type'' approach via the WMC, we intend to study a (particular) semi-simple case, which the reader can skip if pressed for time. The following hypotheses will be assumed~:
Let $F/{\Bbb Q}$ be a {\it totally real abelian} extension of conductor $f$, 
such that $p$ does not divide the order of $G = Gal (F/{\Bbb Q})$.
Let $U_F$ (resp. $U'_F)$ be the group of units (resp. $(p)$-units) of $F$.
The group Cyc $(F)$ of $F$ is the subgroup of $F^\ast$ generated by $-1$ and all the elements 
$N_{{\Bbb Q}(\zeta_f)/F \cap {\Bbb Q}(\zeta_f)} (1-\zeta^a_f),$ $(a, f) = 1$.
The group $C_F$ (resp. $C'_F)$ of circular units (resp. circular $(p)$-units in Sinnott's sense is defined as $C_F = U_F \cap Cyc (F)$ 
(resp. $C'_F = U'_F \cap Cyc (F))$. Write $U_n$, $C_n$, etc for $U_{F_n}$, $C_{F_n}$, etc and 
$\overline {(\ldotp)}$ for the $p$-completion. We take $V_n = \overline U'_n$ (which is ${\Bbb Z}_p$-free of rank 
$[F_n:{\Bbb Q}]$) and look for candidates for the $W'_n$s. Since $p \mid\!\!\!\!/ \vert G \vert$, any $(p)$-place of $F$ is totally ramified in $F_\infty$. Let us denote by $s$ the number of $(p)$-places of $F$ (hence also of $F_\infty$). We keep the Galois notations of the beginning of section 1.

\begin{lem}\label{2.1}
For any $n \geq 1,$ the ${\Bbb Z}_p [G_n]$-module $\overline C'_n$ is free (necessarily of rank 1) if and only if $s = 1.$
\end{lem}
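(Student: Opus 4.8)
The plan is to analyze the $\mathbb{Z}_p[G_n]$-module structure of $\overline{C}'_n$ via semilocal theory at the primes above $p$, exploiting the hypothesis $p \nmid |G|$ to pass to $\psi$-components for each $\mathbb{Q}_p$-valued character $\psi$ of $G$ (or of $G_n$, which has the same prime-to-$p$ part). First I would recall the basic rank count: $\overline{U}'_n$ is $\mathbb{Z}_p$-free of rank $[F_n:\mathbb{Q}] = |G_n|$, while by Sinnott/Dirichlet the circular $(p)$-units $\overline{C}'_n$ form a subgroup of finite index in $\overline{U}'_n$, hence also of $\mathbb{Z}_p$-rank $|G_n|$. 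So $\overline{C}'_n$ always has the right rank to be free of rank $1$ over $\mathbb{Z}_p[G_n]$; the question is genuinely about the $G_n$-module structure, equivalently about whether each $\psi$-component $(\overline{C}'_n)^\psi$ is free of rank $1$ over the corresponding local ring $\mathbb{Z}_p[\psi][G_n]^\psi \cong$ (a product of) $\mathbb{Z}_p[\psi][\Gamma_n]$.

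The key step is to identify $(\overline{C}'_n)^\psi$ explicitly. Using that $p \nmid |G|$, every $(p)$-place of $F$ is totally ramified in $F_\infty/F$, so the decomposition group of each such place in $G_n$ is cyclic of order prime to $p$ times $\Gamma_n$. One computes, using the standard description of Sinnott circular units as built from norms of $1-\zeta_f^a$ together with the $(p)$-unit contributions at the $s$ primes above $p$, that $(\overline{C}'_n)^\psi$ is, up to a module of bounded index that one must control, an extension involving the "elliptic/cyclotomic" part (rank $1$) and a "semilocal" part coming from the $s$ primes above $p$. The cleanest route is to compare $\overline{C}'_n$ with $\overline{C}_n$ (circular units, no $p$): there is an exact sequence relating $\overline{C}'_n/\overline{C}_n$ to the subgroup of the semilocal units $\prod_{v|p}\overline{U}_{n,v}$ generated by the images of the circular $(p)$-units, and the $G_n$-module $\prod_{v|p}(\ldots)$ is an induced module $\mathrm{Ind}_{D_v}^{G_n}(\ldots)$ from the decomposition group, which is $G_n$-free of rank $1$ precisely when $s=1$ (a single orbit of $p$-places with trivial "extra" stabilizer), and otherwise is induced from a proper subgroup and hence not monogenic over $\mathbb{Z}_p[G_n]$ in any $\psi$-component for which the relevant local character contribution is nonzero.

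Concretely, for the "if" direction ($s=1$): when there is a unique prime above $p$ its decomposition group in $G_n$ is all of $G_n$, the semilocal module is just $\overline{U}_{n,v}$ which by local class field theory / Iwasawa's local units theorem is $\mathbb{Z}_p[G_n]$-free of rank $1$ (after the standard adjustment), and one assembles a $\mathbb{Z}_p[G_n]$-generator $\eta_n$ of $\overline{C}'_n$ by combining a generator of $\overline{C}_n$-mod-torsion with the local generator; checking norm-coherence of the $\eta_n$ gives the admissible pair needed later. For the "only if" direction ($s\geq 2$): I would exhibit a character $\psi$ of $G$ for which $(\overline{C}'_n)^\psi$ fails to be cyclic over the local ring — the obstruction being exactly that $\prod_{v|p}(\ldots)^\psi$ is a sum of $\geq 2$ copies of a rank-one local module because the $s$ primes above $p$ break the $G_n$-set into a non-free induced module; a Fitting-ideal or minimal-number-of-generators argument over the local (regular, hence nice) ring $\mathbb{Z}_p[\psi][\Gamma_n]$ then shows freeness of rank $1$ is impossible.

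I expect the main obstacle to be the bookkeeping of the \emph{index} discrepancies: the Sinnott circular units $\overline{C}'_n$ and the "idealized" module (cyclotomic part $\oplus$ semilocal part) agree only up to finite, $G_n$-stable subquotients whose order may involve the factors $\prod(1-\text{Frob})$ familiar from Sinnott's index formula, and one must verify these do not destroy freeness in the $s=1$ case (they shouldn't, as $p\nmid|G|$ forces these factors to be $p$-adic units in each $\psi$-component with $\psi\neq 1$, and the $\psi=1$ component is handled directly). Making the module-theoretic comparison precise enough to conclude "free of rank $1$" rather than merely "rank $1$ and $G_n$-cohomologically trivial" is the delicate point; the semisimplicity hypothesis $p\nmid|G|$ is what ultimately rescues it, by reducing everything to counting copies of rank-one modules over the regular local rings $\mathbb{Z}_p[\psi][\Gamma_n]$.
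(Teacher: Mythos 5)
There are genuine gaps here, and the central mechanism you propose is not the right one. First, the rank bookkeeping: the claim that $\overline C'_n$ has finite index in $\overline U'_n$ ``by Sinnott/Dirichlet'' is false whenever $s>1$. One has $\mathrm{rank}_{\mathbb{Z}_p}\overline U'_n=[F_n:\mathbb{Q}]-1+s$, whereas the circular numbers with nontrivial $p$-valuation all come from $p$-power conductors, hence lie in $F_n\cap\mathbb{Q}(\zeta_{p^\infty})$, a field with a single (totally ramified) prime above $p$; so the image of $C'_n$ under the $p$-valuation map has rank $1$ for \emph{every} $s$, and $\mathrm{rank}_{\mathbb{Z}_p}\overline C'_n=[F_n:\mathbb{Q}]$. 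Thus the rank is always the ``right'' one, which means no rank or induced-module count can detect $s$, and your proposed obstruction for the ``only if'' direction collapses: the semilocal module $\oplus_{v\mid p}$ is induced from the decomposition subgroup, and induction of a free rank-one module is again free of rank one over $\mathbb{Z}_p[G_n]$ (and $\mathbb{Z}_p[G_n/D]$ is still monogenic), so ``induced from a proper subgroup'' does not preclude freeness or monogeneity; moreover the circular $p$-units simply do not spread out over the $s$ local factors, as just noted. The true obstruction when $s>1$ is arithmetic, not formal: in the distribution (norm) relations the Euler factor $1-\mathrm{Frob}_p^{-1}$ vanishes on the characters trivial on the decomposition group at $p$ (such characters exist exactly when $s>1$), so e.g.\ for $p$ split in a real quadratic field the norm to $F$ of the top generator dies in the nontrivial character component while the conductor-$f$ circular unit survives, forcing at least two generators there; equivalently $\hat H^{*}(\Gamma_n,\overline C'_n)\neq 0$. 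Your ``if'' direction is also not viable as sketched: $\overline C_n$ has $\mathbb{Z}_p$-rank $[F_n:\mathbb{Q}]-1$, so it is never $\mathbb{Z}_p[G_n]$-free and has no generator to ``combine'' with a local generator, and freeness of the semilocal units is not the relevant input.

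For comparison, the paper's proof is short and rests on a prior cohomological computation: by \cite{NLB} (Prop.\ 2.8) the module $\overline C'_n$ is $\Gamma_n$-cohomologically trivial if and only if $s=1$; since $\Gamma_n$ is a $p$-group, $\mathbb{Z}_p[\Gamma_n]$ is local and, for a torsion-free module, cohomological triviality is equivalent to freeness; finally $p\nmid|G|$ allows one to pass from $\Gamma_n$ to $G_n$ and to $\chi$-components over the local algebras $\mathbb{Z}_p[\chi][\Gamma_n]$, giving $\mathbb{Z}_p[G_n]$-freeness of rank one. Your semisimple reduction to $\psi$-components matches that last step, but the heart of the lemma --- the computation of $\hat H^{*}(\Gamma_n,\overline C'_n)$ via the distribution relations and the Euler factor at $p$, or some equivalent generators-and-relations analysis --- is exactly what is missing from your plan, and the index-discrepancy bookkeeping you flag as the main obstacle is not where the difficulty lies.
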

\begin{proof}
Let us first consider only the $\Lambda$-module structure. The cohomology groups $H^i(\Gamma_n, \overline C'_n)$ are
 computed in general in \cite{NLB}. In our situation, 
$\overline C'_n$ is $\Gamma_n$-cohomologically trivial if and only if 
$s = 1$ (\cite{NLB}, propos. 2.8).
Since $\Gamma_n$ is a $p$-group, ${\Bbb Z}_p  [\Gamma_n]$ is a local algebra, and for a
${\Bbb Z}_p [\Gamma_n]$-module without torsion, cohomological triviality is equivalent to 
${\Bbb Z}_p [\Gamma_n]$-freeness. To pass from $\Gamma_n$ to $G_n$, just notice that 
$H^i (G_n, \overline C'_n)^\Delta\ {\displaystyle\mathop \simeq^{\text{ res}}} H^i(\Gamma_n, \overline C'_n)$ because 
$p \mid\!\!\!\!/ \vert \Delta \vert,$ hence $s = 1$ if and only if $\overline {C}'_n$ is ${\Bbb Z}_p [G_n]$-projective 
(of ${\Bbb Z}_p$-rank equal to $\vert G_n \vert)$. By decomposing $\overline C'_n$ into $\chi$-parts, 
$\overline C'_n = {\displaystyle\mathop\oplus_{\chi \in \widehat G}} \ (\overline {C}'_n)^\chi$, we see that $s = 1$ if 
and only if each $(\overline C'_n)^\chi$ is free over the local algebra ${\Bbb Z}_p [\chi] [\Gamma_n]$, if and 
only if $\overline C'_n$ is ${\Bbb Z}_p [G_n]$-free (necessarily of rank 1). 
\end{proof}
Summarizing, if $s = 1$, one can take $V_n = \overline U'_n$, $W_n = \overline C'_n$ in order to apply proposition \ref{1.1} to $t B_n = B_n = \overline U'_n/\overline C'_n$. 
Note that in the semi-simple case with $s = 1$, $ \overline U'_n/\overline C'_n \simeq \overline U_n/\overline C_n$ for any $n \geq 1$ (\cite{NLB}, lemma 2.7) and also 
$X'_\infty \simeq X_\infty$ in the notations of the introduction (\cite{NLB}, lemma 1.5). Recall that $X_\infty$ (resp. $X'_\infty)$ is the unramified 
(resp. totally split at all finite places) Iwasawa module above $F_\infty$. We can now determine the ${\Bbb A}$-Fitting ideal of $X_\infty$ in our particular case~:
\begin{prop}\label{2.2}
 Let $F/{\Bbb Q}$ be a totally real abelian extension, such that $p \mid\!\!\!\!/ \vert G \vert$ and $s = 1$. Then $\mathcal {D} (F_\infty)^{\#} = Fit_{\Bbb A} (X_\infty),$ where
$(\ldotp)^{\#}$ means inverting the Galois action.
\end{prop}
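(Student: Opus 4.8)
The plan is to feed the admissible pair $(V_n,W_n)=(\overline U'_n,\overline C'_n)$ of Lemma~\ref{2.1} into the formalism of \S1 and then to match the resulting ${\Bbb A}$-module with $X_\infty$ via the Main Conjecture. As $\overline U'_n$ is ${\Bbb Z}_p$-free of rank $[F_n:{\Bbb Q}]=|G_n|$ and $\overline C'_n$ is ${\Bbb Z}_p[G_n]$-free of rank $1$ (Lemma~\ref{2.1}, using $s=1$), the quotient $B_n=\overline U'_n/\overline C'_n$ is finite, so $t\,B_n=B_n$, and Proposition~\ref{1.1}, Corollary~\ref{1.2} and Definition~\ref{1.3} give
\[{\cal D}(F_\infty)=Fit_{\Bbb A}\bigl(\varprojlim B_n^\ast\bigr)=Ann_{\Bbb A}\bigl(\varprojlim B_n^\ast\bigr),\qquad \varprojlim B_n^\ast\simeq{\Bbb A}/{\cal D}(F_\infty).\]
Since $(\ldotp)^{\#}$ is a ring automorphism of ${\Bbb A}$ commuting with the formation of Fitting ideals, it is enough to exhibit an ${\Bbb A}$-isomorphism $X_\infty\simeq(\varprojlim B_n^\ast)^{\#}$ --- or merely to show the two sides share the same ${\Bbb A}$-Fitting ideal --- and then to take Fitting ideals and conjugate.

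I would then split into isotypic components. Because $p\nmid|G|$ while $Gal({\Bbb Q}_\infty/{\Bbb Q})$ is pro-$p$, one has ${\Bbb Q}_\infty\cap F={\Bbb Q}$, hence ${\Bbb A}=\Lambda[G]\simeq\prod_\chi{\Bbb Z}_p[\chi][[\Gamma]]$, with $\chi$ running through the ${\Bbb Q}_p$-conjugacy classes of (necessarily even) characters of $G$, and Fitting ideals are computed componentwise. The trivial-character component is handled separately: by $p\nmid|G|$ it is the Iwasawa module of ${\Bbb Q}_\infty$, which vanishes (the only ramified prime $p$ is totally ramified and $Cl({\Bbb Q})=0$, so Nakayama applies), and the trivial part of $\varprojlim B_n^\ast$ vanishes too, by the circular-unit index formula over ${\Bbb Q}_n$ together with the same triviality. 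From now on fix a nontrivial even $\chi$ and work over the two-dimensional regular local ring ${\Bbb Z}_p[\chi][[\Gamma]]$.

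The core of the proof is to identify $\bigl((\varprojlim B_n^\ast)^{\#}\bigr)^\chi$ with $X_\infty^\chi$, and I would do this by confronting two exact sequences: on the unit side, one dualises $0\to\overline C'_n\to\overline U'_n\to B_n\to0$ and passes to the projective limit; on the arithmetic side, one uses the class-field-theory sequence relative to inertia,
\[0\to\overline U_\infty/\overline C_\infty\to{\cal U}_\infty/\overline C_\infty\to{\goth X}_\infty\to X_\infty\to0,\]
together with its $(p)$-unit analogue and the fact that $s=1$ forces $X'_\infty\simeq X_\infty$ and $\overline U'_n/\overline C'_n\simeq\overline U_n/\overline C_n$. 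The WMC, equivalently the ``Gras type'' equality (2), matches the characteristic ideals of the terms; to upgrade this to an equality of Fitting ideals one invokes that, over ${\Bbb Z}_p[\chi][[\Gamma]]$, $\varprojlim B_n^\ast$ is monogeneous (Proposition~\ref{1.1}), $\overline C'_\infty$ is free of rank $1$ (Lemma~\ref{2.1} in the limit) and ${\cal U}_\infty$ has the structure dictated by $s=1$, so that all terms have projective dimension $\le1$ and no pseudo-null discrepancy intervenes. The twist $(\ldotp)^{\#}$ enters through Kummer/Poitou--Tate duality, which on the dual side trades the unit quotient for the class-group side; concretely one may pass to $E=F(\zeta_p)$ and cut ${\cal D}(E_\infty)$ by the characters of $Gal(E/F)$ as prepared in Proposition~\ref{1.4} --- the mirror of $\psi=\chi\omega$ being the character $\chi^{-1}$ of $G$, and $(X'_\infty(E))^{\chi^{-1}}$ descending to $X_\infty^{\chi^{-1}}$ because $p\nmid[E:F]$ --- which is exactly the setting in which \cite{NN} and \cite{So} already establish the analogous identity.

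The main obstacle, I expect, is precisely this last upgrade: the Main Conjecture only delivers characteristic ideals, and to get an equality of Fitting ideals rather than an equality up to pseudo-null modules one genuinely needs both the monogeneity of $\varprojlim B_n^\ast$ coming from Proposition~\ref{1.1} and the ${\Bbb A}$-freeness of $\overline C'_\infty$ that $s=1$ guarantees through Lemma~\ref{2.1}; without $s=1$ the modules in play need not have projective dimension $\le1$ and the method breaks down. A secondary technical point is the careful bookkeeping of transition maps in the projective limit --- norms on the module side versus the maps dual to corestriction, cf.\ \cite{So}, prop.~1 --- when one passes from the finite-level descriptions of Proposition~\ref{1.1} to their limit.
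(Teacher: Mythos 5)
Your setup is correct as far as it goes: with $(V_n,W_n)=(\overline U'_n,\overline C'_n)$, the finiteness of $B_n$ together with Proposition \ref{1.1}, Corollary \ref{1.2} and Definition \ref{1.3} does give $\mathcal{D}(F_\infty)=Fit_{\Bbb A}(\varprojlim B_n^\ast)$, and the WMC in its ``Gras type'' form is indeed the analytic input. But the core of your argument rests on a false premise. You claim that for nontrivial $\chi$ ``all terms have projective dimension $\le 1$ and no pseudo-null discrepancy intervenes'', so that monogeneity of $\varprojlim B_n^\ast$ and freeness of $\overline C'_\infty$ upgrade the equality of characteristic ideals to one of Fitting ideals. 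This is not so: $X_\infty$ is here a plus-part module which may perfectly well be finite and nonzero (Greenberg's conjecture even predicts finiteness), in which case $char_{\Bbb A}(X_\infty)=(1)$ while $Fit_{\Bbb A}(X_\infty)\neq(1)$; correspondingly $\varprojlim B_n^\ast$ has a nontrivial finite submodule. The proposition is precisely an identity between two Fitting ideals that differ from the common characteristic ideal by finite (pseudo-null) contributions, and your sketch gives no control of these. Your fallback hope of an isomorphism $X_\infty\simeq(\varprojlim B_n^\ast)^{\#}$ is also unavailable: up to pseudo-isomorphism it is $Y_\infty^{\#}=(\overline U_\infty/\overline C_\infty)^{\#}$, not $X_\infty^{\#}$, that appears in the limit dual.

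What is missing is the codescent computation that identifies the finite parts on both sides. With $s=1$ one has the exact sequence $0\to (Y_\infty)_{\Gamma_n}\to B_n\to (X^0_\infty)^{\Gamma_n}\to 0$ (\cite{NLB}, prop.\ 4.7), where $X^0_\infty$ is the maximal finite submodule of $X_\infty$; dualizing and passing to the limit gives $0\to (X^0_\infty)^\ast\to\varprojlim B_n^\ast\to\alpha(Y_\infty)\to 0$ with $\alpha$ the Iwasawa adjoint. Cornacchia's lemma $Fit(M)=Fit(M^0)\cdot char(M)$, applied componentwise (this is where semi-simplicity is genuinely used), together with $\alpha(M)\sim M^{\#}$ and $Fit_{\Bbb A}(M^\ast)=Fit_{\Bbb A}(M)^{\#}$ (the latter needing cyclicity of the $p$-Sylow subgroups of the $G_n$, [MW], appendix — a fact your Kummer/Poitou--Tate remark does not replace), yields $\mathcal{D}_\infty^{\#}=Fit_{\Bbb A}(X^0_\infty)\cdot char_{\Bbb A}(Y_\infty)$. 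Only at this point does the WMC, in the form $char_{\Bbb A}(Y_\infty)=char_{\Bbb A}(X_\infty)$, conclude, since the same Cornacchia identity rewrites the right-hand side as $Fit_{\Bbb A}(X_\infty)$. In your proposal neither the appearance of $X^0_\infty$ inside $B_n$ nor the matching of the two finite contributions is established; ``confronting the two exact sequences'' and deferring to \cite{NN} and \cite{So} at exactly this point leaves the essential step unproved.
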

This is a particular case $(s = 1)$ of thm 5.3.2 of \cite{NN}. Typical examples are $F = {\Bbb Q}(\zeta_p)^+,$ or $F = {\Bbb Q} (\sqrt d)$, $d$ a square free positive integer such that
\(\displaystyle \left ( \frac d p \right) \neq 1\).
\begin{proof} 
Since $B_n = \overline U_n/\overline C_n$ is finite, proposition \ref{1.1} shows that 
${\cal D } (F_\infty) = {\cal D }_\infty = Fit_{\Bbb A} \bigl( {\displaystyle\mathop{\lim_\leftarrow}}(B_n^\ast) \bigl)$. 
It remains to describe ${\displaystyle\mathop{\lim_\leftarrow}}(B_n^\ast)$. Denote $\overline U_\infty = {\displaystyle\mathop{\lim_\leftarrow}}\,  \overline U_n$, $\overline C_\infty = {\displaystyle\mathop{\lim_\leftarrow}}\,\overline C_n,$ $Y_\infty = \overline U_\infty/\overline C_\infty$, $X^0_\infty=$ the maximal finite submodule of $X_\infty$. With $s = 1$, we have the following codescent exact sequences (\cite{NLB}, proposition 4.7)~:
\[0 \longrightarrow ( Y_\infty )_{\Gamma_n} \longrightarrow B_n \longrightarrow (X^0_\infty)^{\Gamma_n} \longrightarrow 0\ .\]
Taking duals and ${\displaystyle\mathop{\lim_\leftarrow}}$ we get an exact sequence of ${\Bbb A}$-modules~:

$0 \longrightarrow (X^0_\infty)^\ast \longrightarrow {\displaystyle\mathop{\lim_\leftarrow}}\, (B^\ast_n) \longrightarrow \alpha (Y_\infty) \longrightarrow 0$, where $\alpha(\ldotp)$ denotes the Iwasawa adjoint (with additional action by $G$). In particular, 
$(X^0_\infty)^\ast = ({\displaystyle\mathop{\lim_\leftarrow}}\, B_n^\ast)^0$. Let us take Fitting ideals in this exact sequence. A well known lemma (generally attributed to Cornacchia ; see e.g. \cite{NN}, lemma 3.4.2) states that for any torsion $\Lambda [\chi]$-module $M$, 
$Fitt_{\Lambda [\chi]} (M) = Fit_{\Lambda [\chi]} (M^0) \ldotp char_{\Lambda [\chi]} (M)$ where 
$char_{\Lambda [\chi]} (\ldotp)$ denotes the characteristic ideal. In the semi-simple situation, we can put the $\chi$-parts together to get~: 
$Fit_{\Bbb A} ({\displaystyle\mathop{\lim_\leftarrow}}\, B^\ast_n) = Fit_{\Bbb A}(X^0_\infty) \ldotp char_{\Bbb A} (\alpha (Y_\infty))$ (with an obvious definition of $char_{\Bbb A} (\ldotp)$ here). Let $(M)^{\#}$ be the module $M$ with inverted 
Galois action. It is classically known that $\alpha (M)$ is pseudo-isomorphic to $(M)^{\#}$, and that 
$Fit_{\Bbb A} (M^\ast)=Fit_{\Bbb A}(M)^{\#}$ since the $p$-Sylow subgroups of the $G'_n$'s are cyclic ([MW], appendix). 
Hence ${\cal D }_\infty = Fit_{\Bbb A} ({\displaystyle\mathop{\lim_\leftarrow}}\ B^\ast_n) = {\displaystyle\mathop{Fit_{\Bbb A}}(X^0_\infty)^{\#}} \ldotp {\displaystyle\mathop{char_{\Bbb A}(Y_\infty)}^{\#}}$. As $char_{\Bbb A} (Y_\infty) = char_{\Bbb A} (X_\infty)$ in the ``Gras type'' formulation of the WMC, we get~: $({\displaystyle\mathop{{\cal D }_\infty}})^{\#} = Fit_{\Bbb A} (X^0_\infty) \ldotp char_{\Bbb A} (X_\infty) = 
Fit_{\Bbb A} (X_\infty)$.\end{proof}

{\it Remarks~:}
\begin{enumerate}
 \item In spite of the presence of the algebra ${\Bbb A},$ proposition \ref{2.2} is not a genuine equivariant result. In particular, the definition 
of the characteristic ideal $char_{\Bbb A}(\ldotp)$ cannot be generalized to the non semi-simple case.
\item The ideal ${\cal D }(F_\infty)$ can easily be explicited in kummerian terms using $\S$  1.2. It suffices to start from the base field $E = F (\zeta_p)$ and then use (co) descent from $E$ to $F,$ or from $E_\infty$ to $F_\infty,$ which works smoothly because $p$ does not divide $[ E\, :\, F].$
\item It is well known that $\overline U'_\infty = H^1_{I_w,S_p} (F_\infty, {\Bbb Z}_p(1))$ in full generality. In the situation of proposition  
\ref{2.2}, we have also $X_\infty = X'_\infty = H^2_{I_w, S_p} (F_\infty, {\Bbb Z}_p (1))$ (for details, see proposition \ref{3.2} below). We can also consider the modules $H^2_{I_w, S_p} (F_\infty, {\Bbb Z}_p (m)),$ $m$ odd, as in the introduction and describe explicitly their ${\Bbb A}$-Fitting ideals by taking Tate twists above $E_\infty,$ and then doing (co)descent as in 2) (for details, $\S$ 3.4.3 below).
\end{enumerate}
\section{Equivariant study of the non semi-simple case}
In this case, as we noticed before, two major difficulties are encountered right from the start~: the notion of characteristic ideals of torsion modules 
(with appropriate multiplicative properties) is no longer available ; neither is the ``Gras type'' formulation of the WMC. 
The solution to both problems will come from the equivariant Iwasawa theory of complexes. Among many existing formulations, that of M. Witte 
(\cite{W}) seems the best suited to our purpose. Let us recall the minimal amount of definitions and results that we need, 
referring to \cite{W} for further details or greater generality.
\subsection{Perfect torsion complexes and characteristic ideals} (see \cite{W}, $\S$ 1)
Let $R$ be a commutative noetherian ring and $Q(R)$ its total ring of fractions. A complex $C^\cdot$ of $R$-modules is called a {\it torsion} complex if $Q(R) {\displaystyle\mathop\otimes_R}\ C^\cdot$ is acyclic; {\it perfect} if $C^\cdot$ is quasi-isomorphic to a bounded complex of projective $R$-modules. To a perfect complex $C^\cdot$ one can attach its Knudsen-Mumford determinant, denoted $det_R\, C^\cdot$. If moreover $C^\cdot$ is torsion, the natural isomorphism $Q(R) \simeq Q(R) {\displaystyle\mathop\otimes_R}\ det_R\, C^\cdot$ allows to see $det_R\, C^\cdot$ as an invertible fractional ideal of $R$. Recall that these invertible fractional ideals form a group ${\cal J }(R),$ which is isomorphic to $Q(R)^\ast/R^\ast$ if the ring $R$ is semi-local.
\begin{defi}\label{3.1}
The characteristic ideal of a perfect torsion complex $C^\cdot$ is \linebreak $char_R (C^\cdot) = (det_R\, C^\cdot)^{-1} \in {\cal J }(R)$.
\end{defi}
{\it Examples~:}
If $R$ is a noetherian and normal domain and $M$ is a torsion module which is perfect (considered as a complex concentrated in degree 0), i.e. of finite projective dimension, then $char_R (M)$ coincides with the ``content'' of $M$ in the sense of Bourbaki. If $R = \Lambda = {\Bbb Z}_p [[T]],$ then $char_\Lambda (M)$ is the usual characteristic ideal. This justifies the name of equivariant characteristic ideal for $char_{\Bbb A} (M)$.

\vspace{.3cm}

Many functorial properties of $char_R(\ldotp)$ are gathered in \cite{W}, proposition 1.5. We are particularly interested in the following~:

If $C^\cdot$ is a perfect torsion complex of $R$-modules such that cohomology modules of $C^\cdot$ are themselves perfect, then $char_R (C^\cdot) = \displaystyle\mathop\prod_{n \in {\Bbb Z}} (char_R\, H^n C^\cdot)^{(-1)^n}$.
\subsection{Iwasawa cohomology complexes}
For the rest of the paper, unless otherwise specified, $F/{\Bbb Q}$ will be an {\it abelian} number field (not necessarily totally real). With the notations and conventions of the beginning section 1, let us extract from \cite{W}, $\S$ 3 some perfect complexes and cohomology modules for our use (the situation in \cite{W} is more general)~:

The Iwasawa complex of ${\Bbb Z}_p(m)$ relative to $S$ is the cochain complex of continuous \'etale cohomology ${\bf R}\Gamma_{Iw} (F_\infty, {\Bbb Z}_p (m))$ as constructed by U. Jannsen. This is a perfect ${\Bbb A}$-complex whose cohomology modules are
$H^i_{Iw} (F_\infty, {\Bbb Z}_p(m))=H^i_{Iw, S} (F_\infty, {\Bbb Z}_p(m)) = {\displaystyle\lim_{\underset{\text{cores}}{\leftarrow}}} \ H^i_S (F_n, {\Bbb Z}_p (m))$ for $i = 1, 2,$ zero otherwise.

Let us gather in an overall proposition many known properties of these cohomology groups. Our main reference will be \cite{KNF}, sections 1 and 2 
(in which $S = S_p,$ but the proofs remain valid for any finite set $S$ containing $S_p)$. Let us fix again some notations~:

$E = F(\zeta_p),\ E_\infty = F(\zeta_{p^\infty}=,\ \Gamma^\times = Gal\, (E_\infty/F)$

$r_1$ (resp. $r_2$) = number of real (resp. complex) places of $F$

$\overline U_S(E) = U_S (E) \otimes {\Bbb Z}_p$ = the $p$-adic completion of the $S$-units of $E$

$\overline U_S (E_\infty) = {\displaystyle\mathop{\lim_\leftarrow}} \, \overline U_S(E_n)$ w.r.t. norms, $X'(E_\infty)$ = the totally split Iwasawa module above $E_\infty.$
\begin{prop}\label{3.2}
\ 

\begin{itemize}
\item[$(i)$] For any $m \in {\Bbb Z},$ $rank_{{\Bbb Z}_p}\, H^1_S (F, {\Bbb Z}_p(m)) = d_m + \delta_m,$ where $d_m = r_1 + r_2$ (resp. $r_2)$ if $m$ is odd (resp. even), and $\delta_m = rank_{{\Bbb Z}_p} (X'(E_\infty)(m-1)^{\Gamma^\times})$.
\item[$(ii)$] For any $m \in {\Bbb Z},$ $m \not= 0$, $tor_{{\Bbb Z}_p} H^1_S (F, {\Bbb Z}_p (m)) = H^0_S (F, {\Bbb Q}_p/{\Bbb Z}_p(m))$. 
In particular, if $m$ is odd and $F$ totally real, $H^1_S (F, {\Bbb Z}_p(m))$ is ${\Bbb Z}_p$-free.
\item[$(iii)$] For any $m \in {\Bbb Z}$, $m \not= 1$, there is a natural codescent exact sequence~:
\[0 \to \bigl( \overline U_S (E_\infty) (m-1)/\Lambda\text{\rm -tors}\bigl)_{\Gamma^\times} \to H^1_S (F, {\Bbb Z}_p(m))/{\Bbb Z}_{p}\text{\rm -tors} \to X' (E_\infty) (m-1)^{\Gamma^\times} \to 0\]
\item[$(iv)$] For any $m \in {\Bbb Z}$, $m \not= 1$, the Poitou-Tate sequence for $H^2_S$ can be written as~: 
$0 \to X'(E_\infty) (m-1)_{\Gamma^\times} \to H^2_S (F, {\Bbb Z}_p(m)) \to \displaystyle\mathop\oplus_{v \in S_f} H^2(F_v, {\Bbb Z}_p (m)) \simeq \linebreak \displaystyle\mathop\oplus_{v \in S_f} H^0 (F_v, {\Bbb Q}_p/{\Bbb Z}_p\, (1-m))^\ast \to H^0_S (F, {\Bbb Q}_p/{\Bbb Z}_p (1-m))^\ast \to 0$

For $m = 1,$ the leftmost term must be replaced by $A_S (F),$ the $p$-part of the $S_f$-class group of $F.$
\end{itemize}
\end{prop}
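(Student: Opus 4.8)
The plan is to obtain all four assertions from the standard machinery of the Iwasawa theory of $p$-adic \'etale cohomology, following \cite{KNF}, \S\S\,1--2 (where everything is carried out for $S=S_p$); enlarging $S$ to an arbitrary finite set containing $S_p$ only enlarges the localisation terms at $S_f$ and replaces units by $S$-units, and none of the arguments below is sensitive to this. Two inputs carry the weight. The first is a package of \emph{descent (control) exact sequences}: since $E_\infty=F(\mu_{p^\infty})$ contains $\mu_{p^\infty}$, Kummer theory over $E_\infty$ identifies $H^1_{Iw,S}(E_\infty,{\Bbb Z}_p(1))=\overline U_S(E_\infty)$ and realises $H^2_{Iw,S}(E_\infty,{\Bbb Z}_p(1))$ as an extension of $S_f$-local terms by $X'(E_\infty)$; a Tate twist by ${\Bbb Z}_p(m-1)$ transports these to ${\Bbb Z}_p(m)$, and since $\Gamma^\times=\Gal(E_\infty/F)$ is the product of $\Delta$ (of order prime to $p$) with a group $\simeq{\Bbb Z}_p$, taking $\Delta$-invariants reduces its cohomology to that of ${\Bbb Z}_p$ and yields short exact sequences
\[0\to\bigl(H^i_{Iw,S}(E_\infty,{\Bbb Z}_p(m))\bigr)_{\Gamma^\times}\to H^i_S(F,{\Bbb Z}_p(m))\to\bigl(H^{i+1}_{Iw,S}(E_\infty,{\Bbb Z}_p(m))\bigr)^{\Gamma^\times}\to0\]
($H^j_{Iw}=0$ for $j\notin\{1,2\}$, since $cd_p\,G_S\le2$; in particular $H^2_S(F,{\Bbb Z}_p(m))=\bigl(H^2_{Iw,S}(E_\infty,{\Bbb Z}_p(m))\bigr)_{\Gamma^\times}$). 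The second, at the level of $F$ itself, is \emph{Tate's global Euler--Poincar\'e characteristic formula} together with the \emph{Poitou--Tate nine-term exact sequence}, applied to ${\Bbb Z}/p^n(m)$ and passed to the limit.

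For \emph{(ii)} I would run the long exact sequence of $0\to{\Bbb Z}_p(m)\to{\Bbb Q}_p(m)\to{\Bbb Q}_p/{\Bbb Z}_p(m)\to0$: as $H^0_S(F,{\Bbb Q}_p(m))=0$ for $m\ne0$ and $H^1_S(F,{\Bbb Q}_p(m))$ is a ${\Bbb Q}_p$-vector space, the ${\Bbb Z}_p$-torsion of $H^1_S(F,{\Bbb Z}_p(m))$ is the image of $H^0_S(F,{\Bbb Q}_p/{\Bbb Z}_p(m))$; and when $m$ is odd and $F$ is totally real, a complex conjugation acts by $-1$ on ${\Bbb Q}_p/{\Bbb Z}_p(m)$, so (as $p\ne2$) already $H^0(F_v,{\Bbb Q}_p/{\Bbb Z}_p(m))=0$ at a real place and this torsion vanishes. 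For \emph{(i)}, the Euler--Poincar\'e formula expresses $rank_{{\Bbb Z}_p}H^0_S-rank_{{\Bbb Z}_p}H^1_S+rank_{{\Bbb Z}_p}H^2_S$ as the sum of the archimedean local terms, equal to $-d_m$ (complex conjugation acting by $(-1)^m$, $p\ne2$); for $m\ne0$ one has $H^0_S(F,{\Bbb Z}_p(m))=0$, and the degree-$2$ control sequence, combined with the realisation of $H^2_{Iw,S}(E_\infty,{\Bbb Z}_p(m))$ as an extension of the $S_f$-local terms by $X'(E_\infty)(m-1)$ and with the equality of the ${\Bbb Z}_p$-ranks of $\Gamma^\times$-invariants and $\Gamma^\times$-coinvariants of a finitely generated $\Lambda$-torsion module, identifies $rank_{{\Bbb Z}_p}H^2_S(F,{\Bbb Z}_p(m))$ with $rank_{{\Bbb Z}_p}X'(E_\infty)(m-1)^{\Gamma^\times}=\delta_m$.

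Assertion \emph{(iii)} I would read off the degree-$1$ control sequence after the twist: for $m\ne1$, $H^1_{Iw,S}(E_\infty,{\Bbb Z}_p(m))=\overline U_S(E_\infty)(m-1)$, and the $S_f$-local part of $H^2_{Iw,S}(E_\infty,{\Bbb Z}_p(m))$ becomes finite after taking $\Gamma^\times$-invariants, so the control sequence reads $0\to\bigl(\overline U_S(E_\infty)(m-1)\bigr)_{\Gamma^\times}\to H^1_S(F,{\Bbb Z}_p(m))\to\bigl(X'(E_\infty)(m-1)\bigr)^{\Gamma^\times}\to0$ up to ${\Bbb Z}_p$-torsion; since the $\Lambda$-torsion submodule of $\overline U_S(E_\infty)(m-1)$ contributes only ${\Bbb Z}_p$-torsion on the left as well, dividing by ${\Bbb Z}_p$-torsion and using (ii) gives the stated sequence. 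Assertion \emph{(iv)} I would obtain from the tail $\to H^1_S(F,{\Bbb Q}_p/{\Bbb Z}_p(1-m))^\ast\to H^2_S(F,{\Bbb Z}_p(m))\to\bigoplus_{v\in S_f}H^2(F_v,{\Bbb Z}_p(m))\to H^0_S(F,{\Bbb Q}_p/{\Bbb Z}_p(1-m))^\ast\to0$ of the Poitou--Tate sequence (archimedean $H^2$-terms vanishing since $p\ne2$, and local Tate duality giving $H^2(F_v,{\Bbb Z}_p(m))\simeq H^0(F_v,{\Bbb Q}_p/{\Bbb Z}_p(1-m))^\ast$ for $v\in S_f$); it then remains to identify the image of $H^1_S(F,{\Bbb Q}_p/{\Bbb Z}_p(1-m))^\ast$, i.e.\ the kernel of the localisation map, with $X'(E_\infty)(m-1)_{\Gamma^\times}$, which one does by a reflection argument transporting the dual module ${\Bbb Q}_p/{\Bbb Z}_p(1-m)$ up to $E_\infty$ and descending. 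For $m=1$ the nonvanishing of $H^0(E_n,{\Bbb Q}_p/{\Bbb Z}_p(1))=\mu_{p^\infty}(E_n)$ disrupts the descent and the kernel is $A_S(F)$ instead.

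The single genuinely non-formal ingredient --- and where I would concentrate the care --- is this last identification: that the ``locally trivial'' part of $H^2$ is exactly $X'(E_\infty)(m-1)_{\Gamma^\times}$ (resp.\ $A_S(F)$ when $m=1$), with no spurious extra term, and symmetrically that $H^2_{Iw,S}(E_\infty,{\Bbb Z}_p(m))$ is built from $X'(E_\infty)$ and the $S_f$-local terms alone. This rests on the \emph{weak Leopoldt conjecture} for $E_\infty/E$ --- the vanishing of $H^2\bigl(G_S(E_\infty),{\Bbb Q}_p/{\Bbb Z}_p(1)\bigr)$ --- together with Iwasawa's theorem that $X(E_\infty)$ is $\Lambda$-torsion, and on a careful account of which primes of $S_f$ ramify or split in the cyclotomic tower over $E$; none of this is an obstacle here, since $E=F(\zeta_p)$ is abelian over ${\Bbb Q}$ and weak Leopoldt is then a theorem. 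What remains is the routine inverse-limit bookkeeping ($\varprojlim^1$-vanishing of the finite groups entering Poitou--Tate and Euler--Poincar\'e) needed to descend from ${\Bbb Z}/p^n(m)$- to ${\Bbb Z}_p(m)$-coefficients and to keep track of ${\Bbb Z}_p$-torsion via (ii).
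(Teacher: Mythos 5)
Your proposal is correct and follows essentially the paper's own route: the paper establishes Proposition \ref{3.2} simply by invoking \cite{KNF}, sections 1 and 2 (observing that the arguments there for $S=S_p$ go through for any finite $S\supseteq S_p$), and the ingredients you assemble --- the codescent exact sequences for Iwasawa cohomology over $E_\infty=F(\mu_{p^\infty})$, the Kummer-theoretic description of $H^i_{Iw,S}(E_\infty,{\Bbb Z}_p(m))$ via $S$-units and $X'(E_\infty)$ together with weak Leopoldt, the coefficient sequence for (ii), the global Euler--Poincar\'e formula for (i), and Poitou--Tate plus local duality for (iv) --- are exactly the arguments of that reference. The only caveat, inherited from the statement itself rather than introduced by you, is that the identifications of the extreme terms with twists of $X'(E_\infty)$ require $m\neq 0,1$ (as your own use of the vanishing of local contributions shows), which is consistent with the exceptional clause for $m=1$ in (iv).
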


{\it Remark~:}
It is conjectured that $X'(E_\infty) (m-1)^{\Gamma^\times}$ is finite (i.e. $\delta_m = 0)$ for all $m \in {\Bbb Z}$ (see e.g. \cite{KNF}, p. 637). These are the so-called {\it $m^{th}$-twists of Leopoldt's conjecture}. The case $m = 0$ (resp. 1) corresponds to Leopoldt's (resp. Gross') conjecture. For $m \geq 2,$ $\delta_m = 0$ because $K_{2m-{2}}\, {\cal O }_F$ is finite. Recall that we implicitly suppose that $\delta_m = 0$ for all $m \in {\Bbb Z}.$

\vspace{.3cm}

At the infinite level, we have the following
\begin{lem}\label{3.3}
The ${\Bbb A}$-modules $H^i_{Iw} (F_\infty, {\Bbb Z}_p (m)),$ $i = 1, 2,$ are perfect.
\end{lem}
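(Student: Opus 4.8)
The plan is to reduce, by a truncation argument, to the perfectness of a single cohomology module, and then to extract that from the perfectness of the local Iwasawa complexes together with the twisted weak Leopoldt hypothesis that is in force throughout. Set $C^\cdot := \mathbf{R}\Gamma_{Iw}(F_\infty,\mathbb{Z}_p(m))$. We have just recalled (following \cite{W}, \S 3, after Jannsen) that $C^\cdot$ is a perfect complex of $\mathbb{A}$-modules whose cohomology is concentrated in degrees $1$ and $2$; hence, in $D(\mathbb{A})$, the truncation distinguished triangle
\[ H^1_{Iw}(F_\infty,\mathbb{Z}_p(m))[-1]\ \longrightarrow\ C^\cdot\ \longrightarrow\ H^2_{Iw}(F_\infty,\mathbb{Z}_p(m))[-2]\ \longrightarrow \]
is available. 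Since $\mathbb{A}$ is noetherian, the perfect complexes form a thick subcategory of $D(\mathbb{A})$ (closed under shifts, cones and direct summands), so two out of the three terms of this triangle being perfect forces the third to be; as $C^\cdot$ is perfect, it suffices to show that \emph{one} of the modules $H^1_{Iw}(F_\infty,\mathbb{Z}_p(m))$, $H^2_{Iw}(F_\infty,\mathbb{Z}_p(m))$, regarded as a complex in degree $0$, is perfect — the other one then follows automatically. So the lemma reduces to the statement that $H^1_{Iw}(F_\infty,\mathbb{Z}_p(m))$ has finite projective dimension over $\mathbb{A}$.

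This is not a formality, because $\mathbb{A}=\mathbb{Z}_p[[G_\infty]]$ need not be regular: since $G_\infty$ is the product of a copy of $\mathbb{Z}_p$ and a finite abelian group $\Delta_\infty$, one has $\mathbb{A}\cong\mathbb{Z}_p[\Delta_\infty][[T]]$, which fails to be regular as soon as $p\mid|\Delta_\infty|$, and then a finitely generated $\mathbb{A}$-module need not have finite projective dimension. The route I would take is to compare $C^\cdot$ with the \emph{local} Iwasawa cohomology through the Poitou--Tate localisation triangle, which presents $C^\cdot$ as an extension in $D(\mathbb{A})$ of (a shift of) the compactly supported complex $\mathbf{R}\Gamma_{Iw,c}$ by $\bigoplus_{v\in S}\mathbf{R}\Gamma_{Iw}(F_{\infty,v},\mathbb{Z}_p(m))$. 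For $v\mid p$ the local Iwasawa cohomology modules are $\mathbb{A}$-perfect of projective dimension $\le 1$: this is Iwasawa's classical determination of the norm-coherent principal units along the cyclotomic $\mathbb{Z}_p$-tower, suitably Tate-twisted, together with local Tate duality; and for $v\nmid p$ (so $v$ is unramified in $F_\infty/F$) the corresponding complex and its cohomology are likewise $\mathbb{A}$-perfect by the tame local picture. On the other hand $\mathbf{R}\Gamma_{Iw,c}$ is, by Poitou--Tate duality, the Pontryagin dual up to shift of $\mathbf{R}\Gamma(G_S(F),\mathbb{Q}_p/\mathbb{Z}_p(1-m))$, whose cohomology in the relevant degrees is $\mathbb{A}$-perfect once the twisted weak Leopoldt conjectures $\delta_\bullet=0$ are assumed. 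Feeding all of this into the triangle, and using Proposition \ref{3.2} to identify the individual pieces, one obtains a projective resolution of length $\le 1$ of $H^1_{Iw}(F_\infty,\mathbb{Z}_p(m))$, hence of $H^2_{Iw}(F_\infty,\mathbb{Z}_p(m))$. (Alternatively, these structural facts may simply be quoted from \cite{KNF} and \cite{W}.)

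The main obstacle is exactly this last step: over the possibly non-regular ring $\mathbb{A}$ the perfectness of $H^1_{Iw}$ and $H^2_{Iw}$ is a genuine theorem, not a formal consequence of the perfectness of $C^\cdot$. It is precisely here that the hypothesis $\delta_m=0$ is indispensable — it is what prevents the appearance, inside $H^1_{Iw}$ (equivalently $H^2_{Iw}$), of an $\mathbb{A}$-submodule of infinite projective dimension — which is why that hypothesis is assumed once and for all in this paper. The truncation reduction of the first paragraph, by contrast, is purely formal.
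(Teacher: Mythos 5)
Your opening reduction is correct but purely formal: since $\mathbf{R}\Gamma_{Iw}(F_\infty,\mathbb{Z}_p(m))$ is perfect with cohomology concentrated in degrees $1,2$, the truncation triangle and thickness of the perfect subcategory do reduce the lemma to the perfectness of a single cohomology module. The genuine gap is that your second paragraph never establishes that remaining statement: the Poitou--Tate route merely trades it for assertions of exactly the same nature and difficulty, namely that the semi-local modules $\bigoplus_v H^i_{Iw}(F_{\infty,v},\mathbb{Z}_p(m))$ and the cohomology of the compactly supported complex are perfect over $\mathbb{A}$. The inputs you invoke (Iwasawa's description of norm-coherent local units, local Tate duality, the tame picture at $v\nmid p$, weak Leopoldt) give structure results over $\Lambda$ or character by character, but they do not by themselves yield finite projective dimension over the equivariant algebra $\mathbb{A}=\Lambda[H]$ when $p$ divides $|H|$ --- which is precisely the difficulty you yourself flag. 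Indeed, the paper later asserts perfectness of the semi-local modules ``for the same reason as in lemma \ref{3.3}'', i.e.\ the local pieces are themselves handled by the very argument you are trying to avoid, so your route is at best circular. Finally, the decisive sentence ``feeding all of this into the triangle \dots one obtains a projective resolution of length $\le 1$ of $H^1_{Iw}$'' is an unjustified leap: cohomology modules of a perfect complex need not be perfect (that is the whole point of the lemma), and the long exact sequence of the localisation triangle interlaces $H^1_{Iw}$, $H^2_{Iw}$, the local terms and the compactly supported terms in a way that gives no projective-dimension bound without further specific arguments.

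The paper's actual proof is shorter and, contrary to your closing claim, unconditional: the hypothesis $\delta_m=0$ plays no role here (it is used elsewhere, e.g.\ around proposition \ref{3.2} and in the descent arguments). One invokes the base-change quasi-isomorphism of \cite{FK1,Ne}, $\mathbb{A}\otimes_\Lambda \mathbf{R}\Gamma_{Iw}(\mathbb{Q}_\infty,\mathbb{Z}_p(m))\simeq \mathbf{R}\Gamma_{Iw}(F_\infty,\mathbb{Z}_p(m))$, where $\Lambda=\mathbb{Z}_p[[\Gal(\mathbb{Q}_\infty/\mathbb{Q})]]$ is the Iwasawa algebra of the base field $\mathbb{Q}$. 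Since $\mathbb{A}$ is free of finite rank, hence flat, over $\Lambda$, taking cohomology gives $H^i_{Iw}(F_\infty,\mathbb{Z}_p(m))\simeq \mathbb{A}\otimes_\Lambda H^i_{Iw}(\mathbb{Q}_\infty,\mathbb{Z}_p(m))$; the modules $H^i_{Iw}(\mathbb{Q}_\infty,\mathbb{Z}_p(m))$ are finitely generated over the \emph{regular} ring $\Lambda$, hence admit finite resolutions by finitely generated projective $\Lambda$-modules, and tensoring such a resolution with the flat $\Lambda$-algebra $\mathbb{A}$ produces a finite projective resolution over $\mathbb{A}$. This is exactly how the non-regularity of $\mathbb{A}$, which you rightly identify as the obstacle, is circumvented; your proposal, as written, does not get past it.
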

\begin{proof}
Let ${\Bbb Q}_\infty$ be the cyclotomic ${\Bbb Z}_p$-extension of ${\Bbb Q}$ and $\Lambda = {\Bbb Z}_p [[Gal\, ({\Bbb Q}_\infty/{\Bbb Q})]]$. 
Since $\Lambda$ is a regular noetherian ring, every $\Lambda$-noetherian module is perfect. For the cohomology modules over ${\Bbb A}$, 
we just have to use the following quasi-isomorphism established e.g. by 
\cite{FK1,Ne} etc.~: the natural ring homomorphism $\Lambda \to {\Bbb A}$ induces a quasi-isomorphism 
${\Bbb A} \displaystyle\mathop\otimes_\Lambda \displaystyle\mathop {\bf R}\, \Gamma_{Iw} ({\Bbb Q}_\infty, {\Bbb Z}_p(m)) 
\simeq \displaystyle\mathop{\bf R}\, \Gamma_{Iw} (F_\infty, {\Bbb Z}_p (m))$. \end{proof}

\subsection{The limit theorem}
As we explained in the introduction, the ``limit theorem'' of Burns and Greither (\cite{BG1}, thm 6.1) is actually an EMC, expressed in the language
of the Iwasawa theory of complexes, which encapsulates equivariant generalizations of both the WMC (for the minus part of $X_\infty$)
and its formulation ``\`a la Gras'' (for the plus part). It is ultimately derived from the WMC, but only after some hard work. 
We recall here its presentation by M. Witte (only for the characters $\kappa^m_{cyc}$; the characters in \cite{W} are more general).
Our main reference will be \cite{W}, sections 6 and 7. The ``limit theorem'' will relate a ``zeta-element'' and a ``special cyclotomic element'' constructed 
as follows~:

Let $f = N\, p^d$, $p \mid\!\!\!\!/ \, N$, the conductor of $F$. Introduce $L_\infty = {\Bbb Q} (\zeta_{N p^\infty}) \supset F_\infty$. 
At the $n^{th}$-levels, define the Stickelberg elements~:
\[Stick_n = \sum_{0 < a < f p^n \atop (a, fp) = 1} \Bigl( {a \over {f p^n}} - {1 \over 2} \Bigl) \Bigl( \displaystyle\mathop\prod_{\ell\ prime} Frob_\ell^{-v_\ell (a)}\Bigl) \in {\Bbb Q}_p [Gal\, (L_n/{\Bbb Q})]\ ,\]
and at the infinite level~: 
$Stick_\infty = \displaystyle\mathop{\lim_\leftarrow} \, Stick_n \in Q \bigl( {\Bbb Z}_p [[Gal (L_\infty/{\Bbb Q})]] \bigl)$.

Denote by $pr_{\pm}$ the projectors onto the $(\pm 1)$-eigenspaces of complex conjugation and define the {\it zeta-element} 
${\cal L }_S (F_\infty, \kappa^{1-m}_{cyc})$ as the image by the natural map 
$Q\, {\Bbb Z}_p [[(Gal (L_\infty/{\Bbb Q}))]] \longrightarrow Q\, {\Bbb A} = Q\, {\Bbb Z}_p [[G_\infty]]$ of the element 
$Tw_{m-1} (pr_{+} - pr_{-} Stick_\infty)$, where $Tw_a$ is the Tate twist induced by $\sigma \mapsto \kappa^a_{cyc} (\sigma) \ldotp \sigma$.

{\it Example~:} (\cite{W}, proposition 6.3)
Let $F_\infty = {\Bbb Q}_\infty$ and $S = S_p$ and write $\omega$ for the Teichm\"uller character. Then the image by the natural map 
$Q\, {\Bbb Z}_p [[\Gamma]] \to {\Bbb Q}_p$ of ${\cal L }_S ({\Bbb Q}_\infty, \kappa^{1-m}_{cyc})$ is $L_p (1-m, \omega^m \ldotp rec^{-1})$, 
where $rec\ :\ Gal\, ({\Bbb Q}(\zeta_N)/{\Bbb Q} \to ({\Bbb Z}/N {\Bbb Z})^\ast$ is the isomorphism induced by $Frob^{-1}_\ell \mapsto \ell$, 
and $L_p (\ldotp)$ is the Kubota-Leopoldt $p$-adic $L$-function.

Define now the {\it special cyclotomic element} $\displaystyle{\boldsymbol{\eta}}(F_\infty, \kappa^m_{cyc})$ as the image by the \linebreak composite map 
$H^1_{Iw} (L_\infty, {\Bbb Z}_p(1)) = \overline U_S(L_\infty) \displaystyle\mathop{\longrightarrow}^{Tw_{m-1}}\, H^1_{Iw} (L_\infty, {\Bbb Z}_p (m)) = 
\overline U_S (L_\infty) (m-1) \linebreak \displaystyle\mathop{\longrightarrow}^{nat} H^1_{Iw} (F_\infty, {\Bbb Z}_p (m))$ of the element 
$pr_{+} ((1-\zeta_{N p^n})_{n \geq 0})$. The element $\boldsymbol{\eta} (F_\infty, \kappa^m_{cyc})$ allows to modify the Iwasawa complex 
$\displaystyle {\bf R}\Gamma_{Iw} (F_\infty, {\Bbb Z}_p(m))$ to get a perfect torsion complex. 
Precisely, there exists a unique morphism ${\Bbb A} \boldsymbol{\eta} (F_\infty, \kappa^m_{cyc}) [-1] \to \displaystyle{\bf R} \Gamma_{Iw} (F_\infty, {\Bbb Z}_p (m))$ in the derived category which induces the natural inclusion on cohomology, hence a unique (up to quasi-isomorphism) complex denoted by 
$\displaystyle{\bf R}\, \Gamma_{Iw}/ \boldsymbol{\eta} (F_\infty, \kappa^m_{cyc})$ which takes place in the following distinguished triangle
\[{\Bbb A}\, \boldsymbol{\eta} (F_\infty, \kappa^m_{cyc}) [-1] \to \mathop {\bf R}\, 
\Gamma_{Iw} (F_\infty, {\Bbb Z}_p(m)) \to   {\bf R}\, \Gamma_{Iw}/ \boldsymbol{\eta} (F_\infty, \kappa^m_{cyc})\ .\]

\begin{lem}\label{3.4} (\cite{W}, lemma 7.2)
\begin{itemize}
\item[$(i)$] $\displaystyle{\bf R}\Gamma_{Iw}/ \boldsymbol{\eta} (F_\infty, \kappa^m_{cyc})$ is a perfect torsion complex of ${\Bbb A}$-modules
\item[$(ii)$] If $F_\infty$ is totally real and $m$ is odd, then ${\Bbb A} \boldsymbol{\eta}(F_\infty, \kappa^m_{cyc}$ is a free ${\Bbb A}$-module of rank 1.
\end{itemize}
\end{lem}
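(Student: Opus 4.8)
The plan is to prove the two assertions of Lemma \ref{3.4} in turn, relying on the distinguished triangle displayed just before the statement and on Proposition \ref{3.2}. For part $(i)$, I would argue that $\mathbf{R}\Gamma_{Iw}(F_\infty,{\Bbb Z}_p(m))$ is a perfect ${\Bbb A}$-complex concentrated in degrees $1$ and $2$ with $H^1_{Iw}$ and $H^2_{Iw}$ perfect (Lemma \ref{3.3}), that ${\Bbb A}\,\boldsymbol{\eta}(F_\infty,\kappa^m_{cyc})[-1]$ is trivially perfect and torsion (it is a free ${\Bbb A}$-module of rank $1$ placed in degree $1$, and $Q{\Bbb A}\otimes_{\Bbb A}$ of it is one-dimensional), and therefore the mapping cone $\mathbf{R}\Gamma_{Iw}/\boldsymbol{\eta}$ is perfect as an extension of perfect complexes. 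The torsion part is the crux here: I would pass to $Q{\Bbb A}$ and use the long exact cohomology sequence of the triangle together with Proposition \ref{3.2}$(i)$. With $F_\infty$ not necessarily totally real but $m$ general, one sees $Q{\Bbb A}\otimes H^1_{Iw}$ has rank equal to $\mathrm{rank}_{{\Bbb Z}_p}H^1_S(F,{\Bbb Z}_p(m))$ (up to the $\delta_m$ term, which is assumed to vanish), and the special cyclotomic element $\boldsymbol{\eta}$ generates a rank-$1$ free submodule whose image accounts for exactly the cyclotomic (plus) part; the remaining Euler characteristic computation shows $Q{\Bbb A}\otimes(\mathbf{R}\Gamma_{Iw}/\boldsymbol{\eta})$ is acyclic. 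Since this is quoted from \cite{W}, lemma 7.2, I would in practice simply invoke it, but the rank bookkeeping via Proposition \ref{3.2} is the substance.

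For part $(ii)$, assume $F_\infty$ totally real and $m$ odd. I would show ${\Bbb A}\,\boldsymbol{\eta}(F_\infty,\kappa^m_{cyc})$ is a free ${\Bbb A}$-module of rank $1$ by proving that the annihilator of $\boldsymbol{\eta}$ in ${\Bbb A}$ is zero, i.e. that $\boldsymbol{\eta}$ is not a ${\Bbb A}$-torsion element of $H^1_{Iw}(F_\infty,{\Bbb Z}_p(m))$. The idea is to descend to finite level: by Proposition \ref{3.2}$(ii)$, for $m$ odd and $F$ (hence each $F_n$) totally real, $H^1_S(F_n,{\Bbb Z}_p(m))$ is ${\Bbb Z}_p$-free, and its image of the cyclotomic element $pr_+((1-\zeta_{Np^n}))$ twisted by $Tw_{m-1}$ is nonzero because, after cutting by a character, it is governed by a nonvanishing $p$-adic $L$-value $L_p(1-m,\omega^m\cdot\chi)$ (this is the content of the Example after the construction of ${\cal L}_S$). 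One then checks that a nonzero ${\Bbb A}$-submodule of a module whose specializations are ${\Bbb Z}_p$-free and on which ${\Bbb A}$ acts freely on the relevant component must itself be ${\Bbb A}$-free of rank $1$: concretely, ${\Bbb A}$ is a product of power-series rings over local rings, on each factor $\boldsymbol{\eta}$ spans a free rank-$1$ piece because it is a non-torsion element inside a torsion-free module, and the pieces assemble.

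The main obstacle I anticipate is not the perfectness (which is formal once Lemma \ref{3.3} is in hand) but the \emph{freeness} in $(ii)$: ruling out that $\boldsymbol{\eta}$ could generate a proper (non-free, e.g. with ${\Bbb A}$-torsion or of the wrong rank) submodule. Over the non-semisimple algebra ${\Bbb A}={\Bbb Z}_p[[G_\infty]]$ one cannot argue character-by-character, so the cleanest route is to show directly that the natural map ${\Bbb A}\to H^1_{Iw}(F_\infty,{\Bbb Z}_p(m))$, $x\mapsto x\cdot\boldsymbol{\eta}$, is injective. For this I would use that $H^1_{Iw}(F_\infty,{\Bbb Z}_p(m))$ has no nonzero pseudo-null ${\Bbb A}$-submodule (a standard fact for Iwasawa cohomology, the weak Leopoldt-type statement), so an element is ${\Bbb A}$-torsion only if it dies after tensoring with $Q{\Bbb A}$; then the rank computation of part $(i)$, showing the image of ${\Bbb A}\boldsymbol{\eta}$ in $Q{\Bbb A}\otimes H^1_{Iw}$ is exactly one-dimensional and nonzero (again via the nonvanishing $L_p$-value after projecting to the appropriate quotient), forces injectivity and hence freeness. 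Since all of this is precisely \cite{W}, lemma 7.2, the honest write-up is: reduce $(i)$ to Lemma \ref{3.3} and the triangle, reduce $(ii)$ to the non-vanishing of the zeta-element together with Proposition \ref{3.2}$(ii)$, and cite \cite{W} for the details.
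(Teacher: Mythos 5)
The paper offers no proof of this lemma: it is quoted directly from \cite{W}, lemma 7.2, so your plan of reducing $(i)$ to Lemma \ref{3.3} plus the distinguished triangle, reducing $(ii)$ to non-torsionness of $\boldsymbol{\eta}$, and citing \cite{W} for the details is consistent with what the paper itself does, and the overall shape of your sketch is the standard one.

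Three points in the sketch should be repaired, though. First, ${\Bbb A}\,\boldsymbol{\eta}(F_\infty,\kappa^m_{cyc})[-1]$ is certainly not a torsion complex (a nonzero projective module does not die after $\otimes\,Q{\Bbb A}$), nor does torsionness of a cone follow from torsionness of its two ends; what the long exact sequence actually reduces $(i)$ to is that $Q{\Bbb A}\otimes{\Bbb A}\boldsymbol{\eta}\to Q{\Bbb A}\otimes H^1_{Iw}(F_\infty,{\Bbb Z}_p(m))$ is an isomorphism and $Q{\Bbb A}\otimes H^2_{Iw}(F_\infty,{\Bbb Z}_p(m))=0$, i.e.\ weak Leopoldt along the cyclotomic tower together with the fact that $H^1_{Iw}$ modulo the cyclotomic element is ${\Bbb A}$-torsion (Iwasawa's theorem that $\overline U_\infty/\overline C_\infty$ is torsion, equivalently the non-vanishing of the relevant $p$-adic $L$-functions as elements of the total ring of fractions). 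Second, and for the same reason, invoking a single non-vanishing value $L_p(1-m,\omega^m\chi)$ is fragile: such individual values can vanish (trivial zeros), and what is needed is non-vanishing of the power series, not of one specialization. Third, in $(ii)$ the property ``no nonzero pseudo-null submodule'' is not the right input: to get injectivity of $x\mapsto x\boldsymbol{\eta}$ you want the ${\Bbb A}$-torsion submodule of $H^1_{Iw}(F_\infty,{\Bbb Z}_p(m))$ to vanish, which for $F_\infty$ totally real and $m$ odd follows from $H^0(F_\infty,{\Bbb Q}_p/{\Bbb Z}_p(m))=0$ (the infinite-level analogue of Proposition \ref{3.2}$(ii)$), combined with the statement that the image of $\boldsymbol{\eta}$ in $Q{\Bbb A}\otimes H^1_{Iw}$ generates a free rank-one $Q{\Bbb A}$-module, i.e.\ its component at every character is nonzero. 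With these corrections your outline matches the argument behind \cite{W}, lemma 7.2; note also that your descent to finite levels in $(ii)$ reverses the logic of the paper, which deduces the finite-level freeness of $\eta^{(m)}_n$ from this lemma plus codescent rather than the other way around.
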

We can now state the ``limit theorem'' (\cite{BG1}, theroem 6.1; \cite{W}, theorem 7.4).
\begin{theo}\label{3.5}
For an abelian number field $F/{\Bbb Q}$, with $S = S_\infty \cup S_p \cup \, Ram (F/{\mathbb {Q}})$, for any $m\in \mathbb{Z}$~:
\begin{itemize}
 \item[$(i)$] At any prime ${\cal P }$ of codimension 1 of ${\Bbb A}$, containing $p$, the localized complex $\bigl( {\bf R}\, \Gamma_{Iw} / \boldsymbol{\eta}
 (F_\infty, \kappa^m_{cyc}) \bigl)_{\cal P }$ is acyclic (``vanishing of the $\mu$-invariant'').
\item[$(ii)$] ${\cal L }_S (F_\infty, \kappa^{1-m}_{cyc})$ generates the ${\Bbb A}$-characteristic ideal of ${\bf R}\, \Gamma_{Iw}/ \boldsymbol{\eta} (F_\infty, \kappa^m_{cyc})$.
\end{itemize}
\end{theo}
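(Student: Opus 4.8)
The plan is to deduce both parts from the classical Iwasawa Main Conjecture of Mazur--Wiles together with the Ferrero--Washington theorem, the genuinely equivariant content being extracted by a descent argument in the derived category. I would first make two reductions. Since $F_\infty\subseteq L_\infty={\Bbb Q}(\zeta_{Np^\infty})$ with $[L_\infty:F_\infty]<\infty$, the base-change quasi-isomorphism ${\Bbb A}\otimes^{\mathbf{L}}_{{\Bbb Z}_p[[Gal(L_\infty/{\Bbb Q})]]}\mathbf{R}\Gamma_{Iw}(L_\infty,{\Bbb Z}_p(m))\simeq\mathbf{R}\Gamma_{Iw}(F_\infty,{\Bbb Z}_p(m))$ (proved exactly as in Lemma~\ref{3.3}) together with the functoriality of $char_R(\cdot)$ under finite ring maps (\cite{W}, proposition~1.5) identifies both members of $(ii)$ over $F_\infty$ with the images along ${\Bbb Z}_p[[Gal(L_\infty/{\Bbb Q})]]\to{\Bbb A}$ of the analogous objects over $L_\infty$ --- for ${\cal L}_S$ this is precisely its definition as an image of $Stick_\infty$, and $\boldsymbol{\eta}(F_\infty,\kappa^m_{cyc})$ is likewise the image of the cyclotomic element over $L_\infty$ --- so one may take $F_\infty=L_\infty$, $S=S_\infty\cup\{v\mid Np\}$. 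Over $L_\infty$ the cyclotomic character is unambiguously defined, so $Tw_{m-1}$ is a genuine automorphism of ${\Bbb Z}_p[[Gal(L_\infty/{\Bbb Q})]]$ carrying the data attached to ${\Bbb Z}_p(1)$ onto that attached to ${\Bbb Z}_p(m)$; since $char(\cdot)$ is transported by ring automorphisms, one reduces further to $m=1$.

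\emph{Part $(i)$.} By Lemma~\ref{3.4}$(i)$ and Lemma~\ref{3.3}, $\mathbf{R}\Gamma_{Iw}/\boldsymbol{\eta}$ is a perfect torsion complex with perfect cohomology $H^1=H^1_{Iw}(L_\infty,{\Bbb Z}_p(1))/{\Bbb A}\boldsymbol{\eta}$ (the inclusion is injective on $H^1$ since $H^0_{Iw}=0$) and $H^2=H^2_{Iw}(L_\infty,{\Bbb Z}_p(1))$, whence $char_{\Bbb A}(\mathbf{R}\Gamma_{Iw}/\boldsymbol{\eta})=char_{\Bbb A}(H^1)\cdot char_{\Bbb A}(H^2)^{-1}$. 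At a codimension-$1$ prime ${\cal P}\ni p$, acyclicity of the localization amounts to $p$ not dividing this characteristic ideal at ${\cal P}$, i.e. to the vanishing of the $\mu$-invariants of $H^1$ and $H^2$; for $H^2$ this is the vanishing of the $\mu$-invariant of $X'(L_\infty)$ and of the semilocal terms of Proposition~\ref{3.2}$(iv)$, and for $H^1$ it then follows via the exact sequence of class-field theory discussed in the Introduction. All of it reduces to the Ferrero--Washington theorem for the abelian field ${\Bbb Q}(\zeta_{Np^d})$.

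\emph{Part $(ii)$.} Over $L_\infty$ with $m=1$ one has the explicit identifications $H^1_{Iw}(L_\infty,{\Bbb Z}_p(1))=\overline U_S(L_\infty)$ and ${\Bbb A}\boldsymbol{\eta}$ = the cyclotomic-unit submodule generated by $pr_+((1-\zeta_{Np^n})_n)$, while Proposition~\ref{3.2}$(iv)$ expresses $H^2_{Iw}(L_\infty,{\Bbb Z}_p(1))$ through $X'(L_\infty)$ and the semilocal terms at $S_f$. Thus $(ii)$ unwinds to the identity $char_{\Bbb A}\bigl(\overline U_S(L_\infty)/{\Bbb A}\boldsymbol{\eta}\bigr)\cdot char_{\Bbb A}\bigl(H^2_{Iw}(L_\infty,{\Bbb Z}_p(1))\bigr)^{-1}=\bigl({\cal L}_S\bigr)$ in ${\cal J}({\Bbb A})$. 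One then passes to the normalization $\prod_\chi{\Bbb Z}_p[\chi][[\Gamma]]$ of ${\Bbb A}$ (the product over the characters $\chi$ of the torsion subgroup of $Gal(L_\infty/{\Bbb Q})$): on each factor the left-hand side becomes, via the exact sequence of class-field theory relative to inertia discussed in the Introduction (relating units, cyclotomic units and the relevant Iwasawa modules) and via Proposition~\ref{3.2}$(iv)$, the classical ratio of characteristic ideals computed by the Main Conjecture, while the right-hand side becomes, by the interpolation formula of the Example preceding Theorem~\ref{3.5}, a Kubota--Leopoldt $p$-adic $L$-function; so the $\chi$-component of $(ii)$ is precisely Mazur--Wiles together with its ``Gras-type'' reformulation.

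\emph{The main obstacle.} What remains is to glue the $\chi$-components back together over ${\Bbb A}$ itself, and this is the hard part. Knowing each component of $char_{\Bbb A}(\mathbf{R}\Gamma_{Iw}/\boldsymbol{\eta})$ only up to a unit of ${\Bbb Z}_p[\chi][[\Gamma]]$ does not by itself recover the fractional ideal in ${\cal J}({\Bbb A})=Q{\Bbb A}^\times/{\Bbb A}^\times$, because ${\Bbb A}=\prod_{\psi}{\Bbb Z}_p[\psi][T_{(p)}][[\Gamma]]$ is strictly smaller than its normalization whenever the $p$-part $T_{(p)}$ of the torsion of $Gal(L_\infty/{\Bbb Q})$ is non-trivial --- exactly the non-semisimple case. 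The extra rigidity is furnished precisely by part $(i)$: the absence of a $\mu$-invariant lets one represent $\mathbf{R}\Gamma_{Iw}/\boldsymbol{\eta}$ by a bounded complex of finite free ${\Bbb A}$-modules (${\Bbb A}$ being semilocal) whose determinant is a non-zero-divisor prime to $p$, and such determinants \emph{are} pinned down, up to ${\Bbb A}^\times$, by their characterwise images. Carrying out this descent in tandem with the bookkeeping of the semilocal Euler factors $\bigoplus_{v\in S_f\setminus S_p}H^2(F_v,{\Bbb Z}_p(m))$ and the finite terms $H^0_S(F,{\Bbb Q}_p/{\Bbb Z}_p(1-m))$ of Proposition~\ref{3.2}$(iv)$ --- which must cancel against the Euler factors concealed in ${\cal L}_S$ --- is the substance of the work of Burns and Greither (\cite{BG1}) recast by Witte (\cite{W}), and for this reason I would ultimately appeal to those works rather than reproduce their argument here.
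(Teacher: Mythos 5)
The paper offers no proof of Theorem \ref{3.5}: it is quoted from Burns--Greither (\cite{BG1}, Theorem 6.1) in Witte's formulation (\cite{W}, Theorem 7.4), and you too ultimately delegate the decisive equivariant descent to those same sources, so your proposal takes essentially the same route as the paper, namely an appeal to \cite{BG1} and \cite{W}. Your sketch of the ingredients (base change to $L_\infty$ and twisting to $m=1$, Ferrero--Washington for $(i)$, characterwise Mazur--Wiles with its Gras-type reformulation, and the rigidity coming from $\mu=0$ together with agreement of $A$ with its normalization away from $p$ --- in effect the localization argument of Lemma \ref{3.6}) is a faithful outline of how that cited proof goes.
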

To stress the difference between the (equivariant) limit theorem above and results obtained character by character (such as in \cite{HK}), let us cite the following comparison lemma (\cite{W}, lemma 7.6)~:
\begin{lem}\label{3.5bis}
Suppose for simplification that $F$ is linearly disjoint from ${\Bbb Q}_\infty$ and $p^2 \nmid f$. 
Let $\phi \ :\ \Omega = {\cal O }_p [[G_\infty]] \to \widetilde\Omega = \displaystyle\mathop\Pi_{\chi \in \widehat G}\, {\cal O }_p [[Gal\, 
({\Bbb Q}_\infty/{\Bbb Q})]]$ be the normalisation of $\Omega$ in $Q\, \Omega$, where ${\cal O }_p$ is the ring obtained from 
${\Bbb Z}_p$ by adding all the values of all the characters of $G$. Then
\[char_{\widetilde \Omega} \bigl( {\bf L}\,\phi_\ast\,\mathop {\bf R}\, \Gamma_{Iw} / \boldsymbol{\eta} (F_\infty, {\cal O }_p (m)) = 
L_S (F_\infty, \kappa^{1-m}_{cyc} \bigl) \ldotp \widetilde \Omega\]
\end{lem}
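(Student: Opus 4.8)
The plan is to obtain the asserted equality as a base-change consequence of the limit theorem \ref{3.5}, the crucial point being that the normalisation $\phi$ does not enlarge the total ring of fractions. First I would upgrade Theorem \ref{3.5}$(ii)$ from ${\Bbb A}={\Bbb Z}_p[[G_\infty]]$ to $\Omega={\cal O}_p[[G_\infty]]$: since ${\cal O}_p$ is free of finite rank over ${\Bbb Z}_p$, the map ${\Bbb A}\to\Omega$ is finite flat, so ${\bf R}\Gamma_{Iw}/\boldsymbol{\eta}(F_\infty,{\cal O}_p(m))=\Omega\otimes_{\Bbb A}{\bf R}\Gamma_{Iw}/\boldsymbol{\eta}(F_\infty,{\Bbb Z}_p(m))$ is again a perfect torsion complex (Lemma \ref{3.4}$(i)$), and the (flat) base-change property of $char$ (\cite{W}, prop. 1.5) gives $char_\Omega\bigl({\bf R}\Gamma_{Iw}/\boldsymbol{\eta}(F_\infty,{\cal O}_p(m))\bigr)={\cal L}_S(F_\infty,\kappa^{1-m}_{cyc})\cdot\Omega$, the zeta-element being read in $Q\,\Omega$ via its Stickelberger description (or one simply invokes Witte's theorem with coefficients already in ${\cal O}_p$).

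Next I would push the complex $C^\cdot:={\bf R}\Gamma_{Iw}/\boldsymbol{\eta}(F_\infty,{\cal O}_p(m))$ forward along $\phi\colon\Omega\to\widetilde\Omega$. The complex ${\bf L}\phi_*C^\cdot=\widetilde\Omega\otimes^{{\bf L}}_\Omega C^\cdot$ is perfect, since derived base change of a perfect complex is perfect. It is also torsion: as $\widetilde\Omega$ is the integral closure of $\Omega$ in $Q\,\Omega$, one has $Q(\widetilde\Omega)=Q\,\Omega$, whence $Q(\widetilde\Omega)\otimes^{{\bf L}}_{\widetilde\Omega}{\bf L}\phi_*C^\cdot=Q\,\Omega\otimes_\Omega C^\cdot=0$ because $C^\cdot$ is $\Omega$-torsion. (When $p\mid |G|$ the map $\phi$ is not flat, so ${\bf L}\phi_*$ is genuinely derived and the cohomology of ${\bf L}\phi_*C^\cdot$ need not be that of $C^\cdot$; but only $char$ of the total complex is at stake.) Since $\widetilde\Omega=\prod_{\chi\in\widehat G}{\cal O}_p[[\Gal({\Bbb Q}_\infty/{\Bbb Q})]]$ is a finite product of two-dimensional regular local rings, it is semi-local and normal, so ${\cal J}(\widetilde\Omega)\simeq Q(\widetilde\Omega)^\ast/\widetilde\Omega^\ast$. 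Applying the compatibility of the Knudsen-Mumford determinant with base change (\cite{W}, prop. 1.5) and $Q(\widetilde\Omega)=Q\,\Omega$ once more, I obtain $char_{\widetilde\Omega}({\bf L}\phi_*C^\cdot)=\widetilde\Omega\cdot char_\Omega(C^\cdot)={\cal L}_S(F_\infty,\kappa^{1-m}_{cyc})\cdot\widetilde\Omega$.

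There remains to identify ${\cal L}_S(F_\infty,\kappa^{1-m}_{cyc})\widetilde\Omega$ with $L_S(F_\infty,\kappa^{1-m}_{cyc})\widetilde\Omega$. Decomposing along $\widehat G$, the $\chi$-component in ${\cal O}_p[[\Gal({\Bbb Q}_\infty/{\Bbb Q})]]$ of the image of ${\cal L}_S(F_\infty,\kappa^{1-m}_{cyc})$ is obtained by applying $\chi$ to $Tw_{m-1}(pr_{+}-pr_{-})Stick_\infty$; by the interpolation property of the $Stick_n$ (Deligne-Ribet) together with the computation recalled in the Example following Theorem \ref{3.5} (\cite{W}, prop. 6.3) and a descent from ${\Bbb Q}(\zeta_{Np^\infty})$ to $F_\infty$ --- clean precisely because $F$ is linearly disjoint from ${\Bbb Q}_\infty$ and $p^2\nmid f$ --- this $\chi$-component is the Kubota-Leopoldt $p$-adic $L$-function attached to $\omega^m\chi$, which is by construction the $\chi$-component of $L_S(F_\infty,\kappa^{1-m}_{cyc})$ (if one adopts this matching as the very definition of $L_S$, this last step is vacuous). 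Thus Steps 1--2 cost nothing once the limit theorem is granted; the delicate points, and the ones I would expect to be the main obstacle, are the base-change formalism for $char$ along the non-flat map $\phi$ --- circumvented by the invariance $Q(\widetilde\Omega)=Q\,\Omega$ and the base-change property of the determinant --- and, in Step 3, the character-by-character matching of the equivariant Stickelberger element with the classical $p$-adic $L$-functions: standard interpolation theory, but where the sign and twist conventions have to be tracked carefully.
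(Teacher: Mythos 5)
The paper offers no internal proof of this lemma: it is quoted directly from \cite{W} (lemma 7.6), so the only comparison possible is with the argument one expects behind Witte's statement. Your reconstruction is sound and follows exactly that expected route: extend Theorem \ref{3.5}$(ii)$ to $\mathcal{O}_p$-coefficients by finite flat base change, push the perfect torsion complex forward along the (non-flat) normalisation $\phi$ using the compatibility of the Knudsen--Mumford determinant with derived base change together with $Q(\widetilde\Omega)=Q\,\Omega$ (the hypotheses $F\cap\mathbb{Q}_\infty=\mathbb{Q}$ and $p^2\nmid f$ ensuring $\Omega\simeq\mathcal{O}_p[G][[\Gamma]]$, hence the stated product form of $\widetilde\Omega$ and the reducedness needed for this identification of total quotient rings), and finally match the image of $\mathcal{L}_S(F_\infty,\kappa^{1-m}_{cyc})$ character by character with the Kubota--Leopoldt $p$-adic $L$-functions via Deligne--Ribet interpolation and the Example following Theorem \ref{3.5}. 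The only genuine point of caution is the one you already flag: the paper never defines $L_S(F_\infty,\kappa^{1-m}_{cyc})$ as an object distinct from $\mathcal{L}_S(F_\infty,\kappa^{1-m}_{cyc})$, so whether your last step carries real content or is a matter of convention depends on the normalisation adopted in \cite{W}; with the usual conventions your identification of the $\chi$-component as the $S$-truncated $L_p$-function attached to $\omega^m\chi$ is the correct bookkeeping.
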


\subsection{The determinant of $H^2_{Iw} (F_\infty, {\Bbb Z}_p (m))$}
From now on, $F_\infty$ is {\it totally real} and $m \in {\Bbb Z}$ is {\it odd}. For $m < 0$, 
we assume implicitly the validity of the ``twisted'' Leopoldt conjectures (for $m > 1$, this is a theorem, see proposition \ref{3.2}(i)). 
To state and prove our main result, we shall proceed in several steps.

Because the cohomology ${\Bbb A}$-modules $H^i_{Iw} (\ldotp)$ are perfect (lemma \ref{3.3}), it follows from thm. \ref{3.5}(ii) and lemma \ref{3.3} 
that 
\[\begin{aligned} char_{\Bbb A}\bigl( H^1_{Iw} (F_\infty, {\Bbb Z}_p(m))/{\mathbb{A}} \boldsymbol{\eta} (F_\infty, \kappa^m_{cyc})\bigl)^{-1} \ldotp 
char_{\Bbb A}\, H^2_{Iw} (F_\infty, {\Bbb Z}_p(m)) & = ({\cal L }_S (F_\infty, \kappa^{1-m}_{cyc}) \bigl)\\ &= (1)\ , \end{aligned}\] 
so that it remains only to determine the first equivariant characteristic series, appealing to the algebraic results of section 1.

\subsubsection{} The point is to choose the admissible pair $(V_n, W_n)_{n \geq 0}$ attached to $F_\infty/F.$\hfill {\it Fix an odd interger $m \not= 1$} and for any $n \geq 0$, write $\eta_n^{(m)}$ for the image of $\boldsymbol{\eta} (F_\infty, \kappa^m_{cyc})$ 
by the natural map $H^1_{Iw} (F_\infty, {\Bbb Z}_p(m)) \to H^1_S (F_n, {\Bbb Z}_p(m))$. 
Then {\it $\eta^{(m)}_n$ is ${\Bbb Z}_p [G_n]$-free}~: this comes from lemma \ref{3.4} (ii) and the codescent exact sequence of propos. \ref{3.2} (i); for a direct argument, see \cite{BB}, thm. 3.4. Note that highly non trivial ingredients are needed for both proofs~: Bloch-Kato's reciprocity law for the first, twisted Leopoldt's conjecture for the second. We shall take $V_n = H^1_S (F_n, {\Bbb Z}_p(m))$ and $W_n = {\Bbb Z}_p [G_n] \ldotp \eta_n^{(m)}$. Adding a superscript $(\ldotp)^{(m)}$ 
to the notations of section 1, let ${\cal D }^{(m)} (F_\infty)$ = ${\rm Fit}_{\Bbb A}\, (\displaystyle\mathop{\lim_{\leftarrow}} B^\ast_n)$, 
which we must relate to the desired ${\Bbb A}$-determinant.

{\it Remark~:} The reason for choosing a twist $m \not= 1$ is that codescent on $S$-units (corresponding to $m = 1$) 
is notoriously not smooth, especially in case of $p$-decomposition. We shall reintegrate $m = 1$ in thm. \ref{3.7} by using a ``twisting trick''.

Since the $p$-Sylow subgroups of the $G_n$'s are no longer necessarily cyclic, the argument on Fitting ideals used in the semi-simple case (proposition \ref{2.2}) no longer works. We must consequently change the definition of $V_n = H^1_S (F_n, {\Bbb Z}_p (m))$, replacing it by $V'_n =$ image of 
$\bigl(  H^1_{Iw} (F_\infty, {\Bbb Z}_p(m)) \to H^1_S (F_n, {\Bbb Z}_p (m)) \bigl)$. We don't change 
$W_n = {\Bbb Z}_p [G_n] \ldotp \eta_n^{(m)}$ and we define $B'_n = V'_n/W_n$. By proposition
 \ref{3.2} (iii), $\displaystyle\mathop{\lim_\leftarrow} V'_n = \displaystyle\mathop{\lim_\leftarrow} V_n = H^1_{Iw} (F_\infty, {\Bbb Z}_p (m))$, hence $\displaystyle\mathop{\lim_\leftarrow} B'_n = \displaystyle\mathop{\lim_\leftarrow} B_n := B_\infty = H^1_{Iw} (F_\infty, {\Bbb Z}_p(m))/{\Bbb A} \boldsymbol{\eta} (F_\infty, \kappa^m_{cyc}),$ whereas $\displaystyle\mathop{\lim_\leftarrow} B_n^{' \ast} = \alpha (\displaystyle\mathop{\lim_\leftarrow} B'_n) = \alpha (B_\infty)$, $\alpha(\ldotp)$ denoting the Iwasawa adjoint, with 
additional action by $H = Gal (F_\infty/k_\infty)$. Note that this adjoint module is naturally isomorphic to ${\rm Ext}_{\Bbb A} (B_\infty, {\Bbb A})$ (see e.g. \cite{Ng}, $\S$ 3) over ${\Bbb A}$. 
Over $\Lambda$, we know that $\alpha(B_\infty)$ and $({\displaystyle\mathop{B_\infty}})^{\#}$ are pseudo-isomorphic, hence the existence of an exact sequence 
(non canonical) of $\Lambda$-modules $0 \to \alpha(B_\infty) \to ({\displaystyle\mathop{B_\infty}})^{\#} \to \Phi \to 0,$ where $\Phi$ is a finite abelian $p$-group. Since $H$ acts on the first two terms, it also acts on the third, i.e. the above sequence is indeed exact over ${\Bbb A}$.

\subsubsection{} Since $\displaystyle\mathop{\lim_\leftarrow} V'_n = \displaystyle\mathop{\lim_\leftarrow} V_n$,
the same reasoning exactly as in proposition \ref{1.1} shows that 
${\rm Fit}_{\Bbb A} (\displaystyle\mathop{\lim_\leftarrow} B_n^{' \ast}) = {\rm Fit}_{\Bbb A} 
(\alpha (B_\infty)) = {\cal D }^{(m)} (F_\infty)$ 
(the same $\cal {D} ^{(m)}(F_\infty)$ as before). In particular, this Fitting ideal is monogeneous according to proposition \ref{1.3} and an obvious descent from $E_\infty = F_\infty (\mu_{p^\infty})$ to $F_\infty$.
 It remains to compare the two monogeneous ideals $I = {\rm Fit}_{\Bbb A} (\alpha (B_\infty))$ and 
$J = char_{\Bbb A} ({\displaystyle\mathop{B_\infty}})^{\#}$ by using localization~:
\begin{lem}\label{3.6} (\cite{BG1}, lemma 6.1)
Let $R$ be a Cohen-Macaulay ring of dimension 2 and let $I, J$ be two invertible fractional ideals of $R$. 
Then $I = J$ if and only if $I_{\cal P } = J_{\cal P }$ for all height one prime ideals ${\cal P }$ of $R$.
\end{lem}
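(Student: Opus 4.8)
\emph{Proof proposal.} The implication $I=J \Rightarrow I_{\cal P}=J_{\cal P}$ for all $\cal P$ is trivial, so only the converse needs work, and the plan is to localise and reduce to a statement about a single ideal. Since the invertible fractional ideals form the group ${\cal J}(R)$ with identity $R$, one has $I=J$ if and only if $L:=I\cdot J^{-1}=R$, and the hypothesis becomes $L_{\cal P}=R_{\cal P}$ for every height-one prime $\cal P$. Moreover equality of fractional ideals may be tested at maximal ideals: if $L_{\mathfrak m}=R_{\mathfrak m}$ for every maximal $\mathfrak m$, then $(L+R)/L$ and $(L+R)/R$ are finitely generated $R$-modules with empty support, hence zero, so $L=R$. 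Thus I may assume $R$ local Cohen-Macaulay; the cases $\dim R\le 1$ are immediate (an invertible fractional ideal over an Artinian local ring is already $R$, and for $\dim R=1$ the hypothesis at the unique height-one prime says exactly $L=R$), so I take $\dim R=2$.

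Over the local ring $R$ an invertible fractional ideal is free of rank one, hence principal: I write $L=hR$ with $h\in Q(R)^{\times}$, so that the hypothesis reads $h\in R_{\cal P}^{\times}$ for every height-one prime $\cal P$, and it suffices to prove $h\in R$ and $h^{-1}\in R$. To obtain $h\in R$ I would examine the denominator ideal $\mathfrak a=\{r\in R:rh\in R\}$: it contains a non-zerodivisor (any $d$ with $dh\in R$, which exists as $h\in Q(R)$), hence lies in no minimal prime, and $h\in R_{\cal P}$ forces $\mathfrak a\not\subseteq{\cal P}$ for every height-one prime $\cal P$, so $\mathrm{ht}\,\mathfrak a\ge 2$. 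Since $R$ is Cohen-Macaulay of dimension $2$, $\mathfrak a$ then contains an $R$-regular sequence $x_1,x_2$, for which $(x_1)\cap(x_2)=(x_1x_2)$; hence $x_1x_2h=x_1(x_2h)=x_2(x_1h)\in(x_1)\cap(x_2)=(x_1x_2)$, and cancelling the non-zerodivisor $x_1x_2$ yields $h\in R$. The same reasoning applies to $h^{-1}$, which is again a unit of $Q(R)$ lying in each $R_{\cal P}^{\times}$, giving $h^{-1}\in R$; therefore $h\in R^{\times}$, $L=R$, and $I=J$.

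I expect the decisive point — and the only place the hypotheses really enter — to be the passage from $\mathrm{ht}\,\mathfrak a\ge 2$ to $h\in R$: this is a Serre $S_2$, or ``algebraic Hartogs'', statement, and here Cohen-Macaulayness (to produce a length-two regular sequence inside a height-two ideal) together with $\dim R=2$ (so that avoiding all primes of height $\le 1$ already forces full height) are both indispensable; the conclusion genuinely fails over arbitrary two-dimensional Noetherian rings. The same idea can be carried out without localising: an invertible module over a Cohen-Macaulay ring is locally free, hence satisfies $S_2$, and a finitely generated $S_2$-submodule of $Q(R)$ is the intersection of its localisations at the primes of height $\le 1$; applying this to $I$ and $J$, which by hypothesis have identical such localisations (those at height zero being automatically $R_{\cal P}$), gives $I=J$ at once.
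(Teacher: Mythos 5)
Your proposal is correct in substance, and it is worth noting that it does more than the paper does: in the text this lemma is simply imported from Burns--Greither (\cite{BG1}, lemma 6.1) without proof, so there is no internal argument to compare with; what you give is a genuine self-contained proof. Your route is the standard one: reduce to $L=I\cdot J^{-1}$ and to the local case by testing at maximal ideals, dispose of dimensions $\leq 1$ directly, and in the two-dimensional local case write $L=hR$ and bound the height of the denominator ideal $\mathfrak a=\{r\in R: rh\in R\}$ from below (it contains a non-zerodivisor, so it avoids the minimal primes, and the hypothesis makes it avoid every height-one prime), then use Cohen--Macaulayness (grade $=$ height) to extract a regular sequence $x_1,x_2\in\mathfrak a$ and conclude by the Hartogs-type computation $(x_1)\cap(x_2)=(x_1x_2)$; the hypotheses enter exactly where you say they do. One step needs an explicit word of justification: when you ``cancel the non-zerodivisor $x_1x_2$'' you are using that $x_2$ itself is a non-zerodivisor on $R$, which is not a formal consequence of $x_1,x_2$ being a regular sequence (only $x_1$ is automatically regular on $R$; $x_2$ is a priori regular only modulo $x_1$). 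Here this is easily repaired: you may assume $\mathfrak a\subseteq\mathfrak m$ (otherwise $h\in R$ at once), and regular sequences inside the maximal ideal of a Noetherian local ring are permutable, so $x_2,x_1$ is again regular and $x_2$ is a non-zerodivisor; alternatively, choose $x_2\in\mathfrak a$ by prime avoidance outside the associated primes of $R$ and of $R/(x_1)$, which is possible since $R$ is Cohen--Macaulay (no embedded primes, and the associated primes of $R/(x_1)$ have height one while ${\rm ht}\,\mathfrak a\geq 2$). With that point made explicit, both your main argument and the $S_2$-intersection variant you sketch at the end (with the height-zero localizations being automatically trivial, as you observe) are complete proofs of the lemma.
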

Cutting out if necessary by the characters of the non-$p$-part of $H,$ we can suppose that our ring ${\Bbb A}$ is like in lemma \ref{3.6} and proceed to localization~:

- at a height one prime $\mathcal{P}$ not containing $p,$ ${\Bbb A}_{\mathcal{P}}$ is a discrete valuation ring in which $p$ is invertible. It follows that ${\rm Fit}\, (\ldotp)$ and ${\rm det}\, (\ldotp)^{-1}$ coincide over ${\Bbb A}_{\mathcal {P}},$ and $\Phi_{\cal P } = (0),$ hence $I_{\cal P } = J_{\cal P }$.

- at the unique height one prime ${\cal P }$ of ${\Bbb A}$ containing $p$, the vanishing of the $\mu$-invariant (thm \ref{3.5} (i)) means that ${\displaystyle(\mathop{B_\infty})_{\cal P }^{\#}}$ vanishes, hence also $\alpha (B_\infty)_{\cal P }$.

We have thus shown that $char_{\Bbb A}({\displaystyle\mathop{B_\infty}})^{\#} = {\rm Fit}_{\Bbb A} (\alpha(B_\infty)) = 
{\mathcal {D} }^{(m)}(F_\infty)$. We can now state and prove our main result~:
\begin{theo}\label{3.7} 
Let $F/{\Bbb Q}$ be a totally real abelian number field, $E = F(\zeta_p)$, ${\Bbb A} = {\Bbb Z}_p [[\Gal(F_\infty/{\Bbb Q})]]$, 
${\Bbb B} = {\Bbb Z}_p [[\Gal(E_\infty/{\Bbb Q})]]$. Then, for any odd $m \in {\Bbb Z}$, we have~:
\begin{itemize}
 \item[$(i)$] ${\rm char}_{\Bbb B} (H^2_{Iw} (E_\infty, {\Bbb Z}_p (m)) = 
{\displaystyle\mathop{{\cal D }^{(1)} (E_{\infty})^{\#} (m-1)} = tw_{m-1} {\cal D }^{(1)} (E_\infty)}$, 
where $tw_{m-1}$ is the Iwasawa twist induced by $\sigma  \mapsto \kappa^{m-1}_{cyc} (\sigma) \sigma^{-1}$.
\item[$(ii)$] ${\rm char}_{\Bbb A} (H^2_{Iw} (F_\infty, {\Bbb Z}_p(m)) = 
({\displaystyle\mathop{e_{m-1} {\cal D }^{(1)} (E_{\infty})}})^{\#}$, where $e_{m-1}$ is the idempotent of $\Delta$ 
associated to the power $\omega^{m-1}$ of a generator $\omega$ of $\widehat \Delta.$
\end{itemize}
\end{theo}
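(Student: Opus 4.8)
The plan is to bootstrap from the identity already established just above the statement, namely $\mathrm{char}_{\mathbb{A}}(H^2_{Iw}(F_\infty,\mathbb{Z}_p(m)))=\mathrm{char}_{\mathbb{A}}(B_\infty)=(\mathcal{D}^{(m)}(F_\infty))^{\#}$ for $F$ totally real and $m$ odd, $m\neq1$, and to extend it in two directions: first to the CM field $E$ (this gives (i)), then back down to $F$ together with the twist $m=1$ (this gives (ii), and in fact reproves the $F$-case uniformly). First I would rerun the whole of \S3.4 over $E$: the limit theorem \ref{3.5} and Lemma \ref{3.3} hold for any abelian base, so the only point to check is the analogue of Lemma \ref{3.4}(ii). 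For $m$ odd the special cyclotomic element $\boldsymbol{\eta}(E_\infty,\kappa^m_{cyc})$ lies in the $+$-eigenspace for complex conjugation, where — via the codescent sequence of Proposition \ref{3.2}(i) and the $m^{th}$ twist of Leopoldt's conjecture, exactly as in \S3.4.1 — it is a free generator of rank one over $e_+\mathbb{B}$; Proposition \ref{1.1} (in the monogeneous form of \S1.2 and Proposition \ref{1.4}) and the localization argument of Lemma \ref{3.6} then apply on that eigenspace and yield $\mathrm{char}_{\mathbb{B}}(H^2_{Iw}(E_\infty,\mathbb{Z}_p(m)))=(\mathcal{D}^{(m)}(E_\infty))^{\#}$, the $-$-eigenspace contribution being the classical ``minus'' main conjecture recorded by the minus part of $\mathcal{L}_S(E_\infty,\kappa^{1-m}_{cyc})$.

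Next, to reintegrate $m=1$ I would use the twisting trick over $E_\infty$. Since $E_\infty\supseteq\mathbb{Q}(\zeta_{p^\infty})$, Tate twisting by $\mathbb{Z}_p(m-1)$ is an isomorphism of Iwasawa modules which twists the $\mathbb{B}$-structure by $tw_{m-1}$; hence $H^2_{Iw}(E_\infty,\mathbb{Z}_p(m))\simeq H^2_{Iw}(E_\infty,\mathbb{Z}_p(1))(m-1)$, and since by construction $\boldsymbol{\eta}(E_\infty,\kappa^m_{cyc})=Tw_{m-1}\boldsymbol{\eta}(E_\infty,\kappa_{cyc})$ and $V_n$ twists accordingly, $\mathcal{D}^{(m)}(E_\infty)=tw_{m-1}\mathcal{D}^{(1)}(E_\infty)$. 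Applying the previous step for any fixed odd $m_0\neq1$ and then transporting by $tw_{m-m_0}$ propagates the identity to all odd $m$ — in particular $m=1$ — and rewrites it, for a general odd $m$, in the two equivalent shapes of (i): $\mathrm{char}_{\mathbb{B}}(H^2_{Iw}(E_\infty,\mathbb{Z}_p(m)))=\mathcal{D}^{(1)}(E_\infty)^{\#}(m-1)=tw_{m-1}\mathcal{D}^{(1)}(E_\infty)$.

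Finally I would descend from $E_\infty$ to $F_\infty$ to get (ii). Because $[E_\infty:F_\infty]=|\Delta|$ is prime to $p$, restriction identifies $H^2_{Iw}(F_\infty,\mathbb{Z}_p(m))$ with the $\Delta$-invariants $e_0 H^2_{Iw}(E_\infty,\mathbb{Z}_p(m))$, $e_0$ the trivial idempotent of $\Delta$; and since the characteristic ideal is compatible with the decomposition $\mathbb{B}=\bigoplus_{\psi\in\widehat\Delta}e_\psi\mathbb{B}$, $\mathrm{char}_{\mathbb{A}}(H^2_{Iw}(F_\infty,\mathbb{Z}_p(m)))$ is the $e_0$-part of the right-hand side of (i). Now $tw_{m-1}$ acts on the group ring of $\Delta$ by $\delta\mapsto\kappa^{m-1}_{cyc}(\delta)\delta^{-1}=\omega^{m-1}(\delta)\delta^{-1}$, so it carries $e_\psi$ to $e_{\psi^{-1}\omega^{m-1}}$; hence the $e_0$-part of $tw_{m-1}\mathcal{D}^{(1)}(E_\infty)$ is $tw_{m-1}$ applied to the $e_{\omega^{m-1}}=e_{m-1}$ component $e_{m-1}\mathcal{D}^{(1)}(E_\infty)$, and unwinding the inversion $\sigma\mapsto\sigma^{-1}$ (using $\#\circ tw_{m-1}=Tw_{m-1}$ and the canonical identification $e_0\mathbb{B}\cong\mathbb{A}$) rewrites this as $(e_{m-1}\mathcal{D}^{(1)}(E_\infty))^{\#}$, which is (ii).

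The hard part is the first step — transplanting the machinery of \S3.4, written for a totally real base, to the CM field $E$. One must establish the $+$-part analogue of Lemma \ref{3.4}(ii), which already rests on the delicate inputs flagged in \S3.4.1 (Bloch--Kato reciprocity, twisted Leopoldt), and one must control the minus part of the Iwasawa complex $\mathbf{R}\Gamma_{Iw}/\boldsymbol{\eta}(E_\infty,\kappa^m_{cyc})$ so that it does not spoil the equality of monogeneous ideals; here one leans on the known (abelian) minus main conjecture and on the precise shape of the local terms in the Poitou--Tate sequence of Proposition \ref{3.2}(iv). A more routine but genuinely fiddly task, pervading all three steps, is the bookkeeping of the operators $Tw_a$, $tw_a$, the Iwasawa adjoint $\alpha$, the sign $\#$, and the $\Delta$-idempotents, in particular the identification $e_0\mathbb{B}\cong\mathbb{A}$ used to state (ii).
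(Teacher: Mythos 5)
Your plan inverts the paper's logic: you propose to prove (i) first by running the machinery of \S 3.4 directly over the CM field $E$, and then to descend to (ii). The genuine gap sits exactly in the step you yourself flag as ``the hard part'', and it is not a matter of bookkeeping. Over $\mathbb{B}$ the module $\mathbb{B}\,\boldsymbol{\eta}(E_\infty,\kappa^m_{cyc})$ is at best free over $e_+\mathbb{B}$ (the special element is $pr_+$ of the cyclotomic element, and $m-1$ is even), so Proposition \ref{1.1} -- whose proof uses the freeness of $W_n$ over the \emph{full} group ring to identify $\Hom_{R_n}(W_n,R_n)$ with $R_n$ -- does not apply as stated; one can work eigenspace by eigenspace, and on the plus side this reduces, via $e_+\mathbb{B}\cong{\Bbb Z}_p[[\Gal(E^+_\infty/{\Bbb Q})]]$, to the totally real theory already in \S 3.4. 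But the minus eigenspace of $B_\infty=H^1_{Iw}(E_\infty,{\Bbb Z}_p(m))/\mathbb{B}\boldsymbol{\eta}$ is then all of $e_-H^1_{Iw}(E_\infty,{\Bbb Z}_p(m))$, and on that eigenspace the limit theorem only tells you that $char_{\mathbb{B}}(e_-H^2_{Iw})$ equals $char_{\mathbb{B}}(e_-H^1_{Iw})$ times the Stickelberger part of $\mathcal{L}_S(E_\infty,\kappa^{1-m}_{cyc})$. Your target identity $char_{\mathbb{B}}\,H^2_{Iw}(E_\infty,{\Bbb Z}_p(m))=(\mathcal{D}^{(m)}(E_\infty))^{\#}$ over the whole of $\mathbb{B}$ therefore needs, in addition to the ``classical minus main conjecture'', a bridge identifying the minus component of the Kummer-pairing ideal $\mathcal{D}^{(1)}(E_\infty)$ with that Stickelberger-type ideal -- in substance the $p$-adic Gauss/Jacobi sum results alluded to in \S 1.2 (\cite{IKY}, \cite{NN} \S 4.1). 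That is a substantive theorem, available in the literature only in special settings, and neither the paper nor your sketch supplies it at the generality required (equality of ideals over $\mathbb{B}$, for $E=F(\zeta_p)$ with $F$ an arbitrary totally real abelian field, all odd $m$). So the first step, on which your steps 2 and 3 rest, is not proved.

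The paper never runs the limit theorem over $E$ at all. It starts from the identity already obtained over the totally real field, $char_{\Bbb A}(H^2_{Iw}(F_\infty,{\Bbb Z}_p(m)))^{\#}=\mathcal{D}^{(m)}(F_\infty)$ for every odd $m\neq 1$, and adds two soft ingredients: the prime-to-$p$ norm/idempotent computation $\mathcal{D}^{(m)}(F_\infty)=\nu\,\mathcal{D}^{(m)}(E_\infty)=e_{m-1}\mathcal{D}^{(1)}(E_\infty)$, and Witte's base-change quasi-isomorphism ${\bf L}\pi_\ast{\bf R}\Gamma_{Iw}(E_\infty,{\Bbb Z}_p(m))\simeq{\bf R}\Gamma_{Iw}(F_\infty,{\Bbb Z}_p(m))$, which gives $char_{\Bbb A}H^2_{Iw}(F_\infty,{\Bbb Z}_p(m))=\nu\,(char_{\Bbb B}H^2_{Iw}(E_\infty,{\Bbb Z}_p(m)))=e_{m-1}(char_{\Bbb B}H^2_{Iw}(E_\infty,{\Bbb Z}_p(1)))$. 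Since $e_{m-1}$ depends only on $m-1$ modulo $p-1$, letting $m$ range over odd integers $\neq 1$ recovers the component needed for $m=1$ as well; the ``twisting trick'' is thus applied to the $F$-level identities, and (i) is \emph{deduced} from this family of $F$-level statements rather than proved directly over $E$. Your steps 2 and 3 (twisting over $E_\infty$, prime-to-$p$ descent, and the computation $tw_{m-1}e_\psi=e_{\omega^{m-1}\psi^{-1}}$, so that the $e_0$-part of $tw_{m-1}\mathcal{D}^{(1)}(E_\infty)$ is governed by $e_{m-1}\mathcal{D}^{(1)}(E_\infty)$) are correct and essentially coincide with the paper's manipulations; if you reorganize so that the only inputs are the \S 3.4 identity over $F_\infty$ for varying odd $m\neq1$ together with the descent comparison between $E_\infty$ and $F_\infty$, the argument closes without ever touching the minus part over $E_\infty$, and the unproved CM-field step disappears.
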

\begin{proof}
For any odd $m \not= 1,$ we have just seen that 
\[{\displaystyle\mathop{{\rm char}}_{\Bbb A} (H^2_{Iw} (F_\infty, {\Bbb Z}_p(m)))^{\#} = 
{\cal D }^{(m)} (F_\infty) = \nu {\cal D }^{(m)} (E_\infty)}\ ,\]where $\nu$ denotes the norm map of $\Delta$. But ${\cal D }^{(m)} (E_\infty)={\cal D }^{(1)} (E_\infty)(m-1)$, so that    $\nu {\cal D }^{(m)} (E_\infty)= e_{m-1} {\cal D }^{(1)} (E_\infty)$, because $\Delta$ is of order prime to $p$.
Besides, denoting by $\pi$ the natural projection ${\Bbb B} \to {\Bbb A},$ we know that 
$\displaystyle\mathop {\bf L} \pi_\ast \displaystyle\mathop {\bf R}\, \Gamma_{Iw}(E_\infty,\mathbb{Z}_p(m))$ is naturally quasi-isomorphic to 
$\displaystyle\mathop{\bf  R}\, \Gamma_{Iw} (F_\infty, {\Bbb Z}_p (m))$ (\cite{W}, propos. 3.6 (ii)). Hence, by definition of the determinant, ${\rm char}_{\Bbb A} (H^2_{Iw} (F_\infty, {\Bbb Z}_p (m)) = \nu\, (char_{\Bbb B}\, H^2_{Iw} (E_\infty, {\Bbb Z}_p (m))) = e_{m-1} (char_{\Bbb B}\, H^2_{Iw} (E_\infty, {\Bbb Z}_p(1))).$ Since the idempotent $e_{m-1}$ depends only on the 
residue class 
of 
$(m-1)$ mod $(p-1),$ we can conclude that the latter equality is valid for all odd $m \in {\Bbb Z}$, so that 
${\rm char}_{\Bbb B}\, H^2_{Iw} (E_\infty, {\Bbb Z}_p(1)) = 
{\displaystyle\mathop{{\cal D }^{(1)}} (E_{\infty})^{\#}}$, and 
${\rm char}_{\Bbb B} H^2_{Iw} (E_\infty, {\Bbb Z}_p (m)) ={\displaystyle\mathop{{\cal D }^{(1)}} (E_{\infty})^{\#}} (m-1) = tw_{m-1} {\cal D }^{(1)} (E_\infty)$. 
\end{proof}
Note that in the semi-simple case, thm. \ref{3.7} above contains thm. 5.3.2 of \cite{NN}.
\subsection{The Fitting ideal of $H^2_{Iw} (F_\infty, {\Bbb Z}_p(m))$}
The next natural step would be to perform (co)descent on thm.  \ref{3.7}. Because $c\, d_p\, G_S (F) \leq 2$, 
the map \linebreak $H^2_{Iw} (F_\infty, {\Bbb Z}_p(m))_{\Gamma^{p^n}} \to H^2_S (F_n, {\Bbb Z}_p (m))$ is an isomorphism, but the latter module needs no longer to be perfect 
(or, equivalently, cohomologically trivial) over ${\Bbb Z}_p [G_n]$. Actually, in the exact sequence of propos. \ref{3.2} (iv), our knowledge of the cohomology of the codescent module $X' (E_\infty) (m-1)_{\Gamma^{\times  p^n}}$ is $\ldots$ less than perfect. A way to turn the difficulty would be to replace determinants by Fitting ideals, which are compatible with codescent. But for this we need an equivariant analogue of Cornacchia's lemma which was used in the proof of proposition \ref{2.2}.
To this end, we would like to add a technical condition which the reader would rightly find too brutal if it were imposed {\it ex abrupto} without any explanation. 
Hence the following preliminaries~:
\subsubsection{} Let us recall briefly the whereabouts of Cornacchia's lemma~: for a noetherian $\Lambda$-module $M$, 
denote by $M^0$ its maximal finite submodule and write 
$\widetilde M = M/M^0$. Since the global projective dimension of $\Lambda$ is equal to 2, $pd_\Lambda\, \widetilde M \leq 1$ because 
$\widetilde M^0 = (0)$ (Auslander-Buchsbaum), hence ${\rm Fit}_\Lambda (M) = {\rm Fit}_\Lambda (M^0) \ldotp {\rm Fit}_\Lambda (\widetilde M)$. 
If moreover $M$ is $\Lambda$-torsion, ${\rm Fit}_\Lambda(\widetilde M) = {\rm char}_\Lambda (\widetilde M) = {\rm char}_\Lambda (M)$. 
The difficulty when passing from $\Lambda$ to ${\Bbb A}$ is that the Auslander-Buchbaum result is no longer available. 
We shall use instead a weak substitute due to Greither. Recall that ${\Bbb A} = \Lambda [H]$, where $H = Gal\, (F_\infty/k_\infty)$.
\begin{lem}\label{3.8} (\cite{G}, propos. 2.4)
If an ${\Bbb A}$-noetherian torsion module $N$ is cohomologically trivial over $H$ and has no non-trivial finite submodule, then $pd_{\Bbb A} (N) \leq 1$.
\end{lem}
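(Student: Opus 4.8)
The plan is to exhibit $\mathrm{pd}_{\mathbb A}(N)\le 1$ by showing that the first syzygy of $N$ over $\mathbb A$ is already projective. So I would fix a surjection $\varepsilon\colon\mathbb A^{r}\twoheadrightarrow N$ from a finitely generated free module and set $\Omega=\ker\varepsilon$, finitely generated since $\mathbb A$ is noetherian; everything then reduces to proving that $\Omega$ is $\mathbb A$-projective.

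First I would pin down $\Omega$ over the regular ring $\Lambda$ (of global dimension $2$). Since $\mathbb A$ is module-finite over $\Lambda$, $N$ is a finitely generated $\Lambda$-torsion module, and having no nonzero finite $\mathbb A$-submodule is the same as having no nonzero finite $\Lambda$-submodule, i.e. $\mathfrak m_{\Lambda}\notin\mathrm{Ass}_{\Lambda}(N)$, so $\mathrm{depth}_{\Lambda}N\ge 1$. Auslander–Buchsbaum then gives $\mathrm{pd}_{\Lambda}N=2-\mathrm{depth}_{\Lambda}N\le 1$, and feeding this into $0\to\Omega\to\mathbb A^{r}\to N\to 0$ (with $\mathbb A^{r}$ being $\Lambda$-free) yields $\mathrm{pd}_{\Lambda}\Omega\le 0$, i.e. $\Omega$ is $\Lambda$-free. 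At the same time $\Omega$ is cohomologically trivial over $H$: the module $\mathbb A^{r}$ is $H$-cohomologically trivial (it is $\mathbb Z_{p}[H]$-flat, or, directly, induced from the trivial subgroup), so dimension shifting in Tate cohomology along the same sequence gives $\widehat H^{i}(H',\Omega)\cong\widehat H^{i-1}(H',N)=0$ for every subgroup $H'\le H$ and every $i$.

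The core is then the algebraic statement: a finitely generated $\mathbb A=\Lambda[H]$-module $\Omega$ that is $\Lambda$-free and $H$-cohomologically trivial is $\mathbb A$-projective. I would prove this by testing projectivity on the special fibre. Every maximal ideal of $\mathbb A$ lies over $\mathfrak m_{\Lambda}$, hence contains $\mathfrak m_{\Lambda}\mathbb A$, so the simple $\mathbb A$-modules are exactly the simple $k[H]$-modules with $k=\Lambda/\mathfrak m_{\Lambda}$; it therefore suffices to show $\mathrm{Ext}^{i}_{\mathbb A}(\Omega,S)=0$ for all $i\ge 1$ and all simple $k[H]$-modules $S$. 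Because $\Omega$ is $\Lambda$-projective one has $\mathrm{Ext}^{i}_{\mathbb A}(\Omega,S)\cong H^{i}(H,\mathrm{Hom}_{\Lambda}(\Omega,S))$ (the usual identification when the source is projective over the coefficient ring), and since $\mathfrak m_{\Lambda}$ annihilates $S$ this equals $H^{i}(H,\mathrm{Hom}_{k}(\overline\Omega,S))=\mathrm{Ext}^{i}_{k[H]}(\overline\Omega,S)$ with $\overline\Omega=\Omega/\mathfrak m_{\Lambda}\Omega$. Now $\overline\Omega$ is still $H$-cohomologically trivial — reduce the $\Lambda$-free module $\Omega$ modulo the regular sequence generating $\mathfrak m_{\Lambda}$ and shift once more in Tate cohomology — and a finite-dimensional $k[H]$-module which is $H$-cohomologically trivial is $k[H]$-projective (over a field, cohomological triviality and projectivity coincide for modules over a finite group algebra). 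Hence $\mathrm{Ext}^{i}_{k[H]}(\overline\Omega,S)=0$ for $i\ge 1$, so $\mathrm{Ext}^{i}_{\mathbb A}(\Omega,S)=0$ for $i\ge 1$ and all simple $S$, and over the noetherian semilocal ring $\mathbb A$ this forces $\mathrm{pd}_{\mathbb A}\Omega=0$. Then $0\to\Omega\to\mathbb A^{r}\to N\to 0$ is a length-one projective resolution, whence $\mathrm{pd}_{\mathbb A}N\le 1$.

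The hard point is precisely this passage from $H$-cohomological triviality — a condition visible already with $\mathbb Z_{p}$- or $k$-coefficients — to projectivity over the two-dimensional algebra $\mathbb A$. One must resist the naive local criterion of flatness here: the vanishing of $\mathrm{Tor}_{1}^{\mathbb A}(\Omega,\mathbb A/\mathfrak m_{\Lambda}\mathbb A)$ already follows from $\Omega$ being $\Lambda$-free and is visibly not sufficient, so one genuinely has to descend to the special fibre and use that $\overline\Omega$ inherits cohomological triviality. A minor technical caveat worth flagging is that the free $\mathbb A$-modules in play have infinite $\mathbb Z_{p}[H]$-rank, so one needs that Tate cohomology of the finite group $H$ commutes with the relevant infinite direct sums and products.
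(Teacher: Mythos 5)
Your proof is correct, but note that the paper contains no proof of this lemma to compare against: it is quoted as is from Greither (\cite{G}, propos.\ 2.4). Your argument is the standard (and essentially Greither's) route: absence of finite submodules gives $\mathrm{depth}_\Lambda N \geq 1$, Auslander--Buchsbaum over the regular ring $\Lambda$ gives $pd_\Lambda N \leq 1$, so the first syzygy $\Omega$ of an ${\Bbb A}$-presentation is $\Lambda$-free and (by dimension shifting) $H$-cohomologically trivial, and one concludes by the classical criterion that a $\Lambda[H]$-module which is $\Lambda$-free and $H$-cohomologically trivial is ${\Bbb A}$-projective, checked on the special fibre via Serre's theorem that a module killed by $p$ is cohomologically trivial if and only if it is projective over $k[H]$. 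Two small points could be tightened. First, the equivalence between ``no nontrivial finite ${\Bbb A}$-submodule'' and ``no nontrivial finite $\Lambda$-submodule'' deserves the one-line justification that the maximal finite $\Lambda$-submodule of $N$ is unique, hence stable under the $\Lambda$-linear action of $H$, hence an ${\Bbb A}$-submodule. Second, the caveat about infinite $\mathbb{Z}_p[H]$-rank is not actually needed: as an $H$-module, ${\Bbb A}^r \simeq \mathrm{Ind}_1^H(\Lambda^r)$ is induced, and Shapiro's lemma for Tate cohomology of the finite group $H$ holds for arbitrary coefficient modules, so ${\Bbb A}^r$ is cohomologically trivial without any limit argument. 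Finally, your appeal to semilocality in the last step is adequate here because ${\Bbb A}$ is commutative (the relevant $H$ is abelian) and module-finite over $\Lambda$: every maximal ideal contains $\mathfrak{m}_\Lambda$, and projectivity of the finitely generated $\Omega$ follows from $\mathrm{Ext}^1_{\Bbb A}(\Omega,S)=0$ for all simple $S$ by localizing and using minimal free resolutions over the local rings ${\Bbb A}_{\mathfrak m}$; in a noncommutative setting one would instead invoke semiperfectness of ${\Bbb A}$.
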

In the sequel, we shall work over $E_\infty = F (\zeta_{p^\infty})$, fix an odd $m \in {\Bbb Z}$, $ m \not= 1$, 
and consider the module $M := H^2_{Iw} (E_\infty, {\Bbb Z}_p (m))$. 
According to proposition \ref{3.2} and after taking ${\displaystyle\mathop{\lim_{\longleftarrow}}}$ with respect to corestriction maps, we have an exact sequence~:

$0 \to X'(E_\infty) (m-1) \to M \to W_m (E_\infty) \to 0$, where $W_m (E_\infty)$ is defined tautologically and will be given an explicit description below. 
This shows in particular that $M^0 = X'(E_\infty)^0 (m-1)$. We want to get hold of $\widetilde M = M/M^0$. 
Applying the snake lemma to the commutative diagram~:
\[\xymatrix{
0\ar[r]  & X'(E_\infty) (m-1)^0 \ar[r]\ar@{^(->} [d] & \ M \ar@{=}[d]\ar[r]& \widetilde M\ar[d]\ar[r] & 0\\
0\ar[r]  & X'(E_\infty) (m-1)\ar[r] & M\ar[r] & W_m (E_\infty)\ar[r] & 0\ \ ,\\
}\]
we get an exact sequence $0 \to \widetilde{X'(E_\infty)} (m-1) \to \widetilde M \to W_m (E_\infty) \to 0$. 
We intend to apply Greither's lemma to the rightmost module $W_m (E_\infty)$, which is the inverse limit of the kernels 
$W_m (E_n) = \, Ker\, \Bigl( \displaystyle\mathop\oplus_{v \in S_f} H^2 (E_{n, v}, {\Bbb Z}_p (m)) \simeq \displaystyle\mathop\oplus_{v \in S_f} H^0 (E_{n, v}, 
{\Bbb Q}_p/{\Bbb Z}_p(1-m))^\ast \to H^0 (E_n, {\Bbb Q}_p /{\Bbb Z}_p (1-m))^\ast \Bigl)$. 
The limit of the semi-local modules is \linebreak $\displaystyle\mathop\oplus_{v \in S_f} H^2_{Iw} (E_{\infty, v}, {\Bbb Z}_p(m))$, 
which is perfect (for the same reason as in lemma \ref{3.3}), hence $H$-cohomologically trivial. As for the limit of the global 
$H^0(\ldotp)^\ast$'s, note that the transition maps reduce to $p^a$-power maps between twisted roots of unity; since 
$H^i (H, \displaystyle\mathop{\lim_\leftarrow}) = \displaystyle\mathop{\lim_\leftarrow} H^i(H, \ldotp)$ and $H$ is finite, we readily get the 
$H$-cohomological triviality of the limit.

Summarizing, lemma \ref{3.8} applies and $W_m (E_\infty)$ has projective dimension $\leq 1$. Going down to $F_\infty,$ we get an exact sequence 
$0 \to e_{m-1} \widetilde{X'} (E_\infty) \to H^2_{Iw} (F_\infty, {\Bbb Z}_p (m)) \to W_m (F_\infty) \to 0$, 
with $pd_{\Bbb A} W_m (F_\infty) \leq 1$. Since $m$ is odd, the leftmost module lives in the ``plus'' part. 
Let $E_\infty^{(m-1)}$ be the subfield of $E_\infty$ cut out by the character $\omega^{m-1}$ (notations of thm \ref{3.7}).

{\it Assume Greenberg's conjecture for} $E_\infty^{(m-1)}/E^{(m-1)}$, which is equivalent to the vanishing of $e_{m-1} \widetilde{X'} (E_\infty)$ 
and yields an exact sequence~:
\[0 \to e_{m-1} X'(E_\infty)^0 \to H^2_{Iw} (F_\infty, {\Bbb Z}_p (m)) \to W_m (F_\infty) = \widetilde{H^2_{Iw}(F_\infty, {\Bbb Z}_p (m))} \to 0\ \ \eqno(3)\]
Since $pd_{\Bbb A}\, W_m (F_\infty) \leq 1$, the Fitting ideal behaves multiplicatively and we have~:
 ${\rm Fit}_{\Bbb A}\, H^2_{Iw} (F_\infty, {\Bbb Z}_p (m))={\rm Fit}_{\Bbb A} \bigl( e_{m-1} X'(E_\infty)^0 \bigl)\ldotp {\rm Fit}_{\Bbb A}\, W_m (F_\infty)$. 
Moreover, \linebreak ${\rm Fit}_{\Bbb A}\, W_m (F_\infty)={\rm char}_{\Bbb A}\, W_m (F_\infty) =\, {\rm char}_{\Bbb A}\, 
\displaystyle{\widetilde{H^2_{Iw} (F_\infty, {\Bbb Z}_p(m))}} =
\, {\rm char}_{\Bbb A}\, H^2_{Iw} (F_\infty, {\Bbb Z}_p(m))$ (the last equality is obtained by the already used localization argument). Finally~:

\begin{theo}\label{3.9}
For any odd $m \in {\Bbb Z},$ $m \not= 1,$ assume Greenberg's conjecture for $E_\infty^{(m-1)}/E^{(m-1)}.$ Then~:

\begin{itemize}
 \item[$(i)$] At infinite level, \[{\rm Fit}_{\Bbb A}\, H^2_{Iw} (F_\infty, {\Bbb Z}_p(m)) =\, {\rm Fit}_{\Bbb A}(e_{m-1}\, X'(E_\infty)^0) \ldotp 
(e_{m-1}\, {\cal D }^{(1)} (E_\infty))^{\#}\ .\]
\item[$(ii)$] At any finite level $n \geq 0$, \[{\rm Fit}_{{\Bbb Z}_p[G_n]}\, H^2_S (F_n, {\Bbb Z}_p (m)) =\, {\rm Fit}_{{\Bbb Z}_p[G_n]}\, 
(e_{m-1}\, X'(E_\infty)^0)_{\Gamma^{p^n}} \ldotp \hfill (e_{m-1}\, {\cal D }_n (E))^{\#}\ .\]
\end{itemize}
\end{theo}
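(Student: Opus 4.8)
The plan is to assemble Theorem \ref{3.9} from the exact sequence (3) together with Theorem \ref{3.7}, handling the infinite level first and then descending to each finite level $n$. For part $(i)$, I start from the exact sequence
\[0 \to e_{m-1} X'(E_\infty)^0 \to H^2_{Iw} (F_\infty, {\Bbb Z}_p (m)) \to W_m (F_\infty) \to 0\]
established above under Greenberg's conjecture for $E_\infty^{(m-1)}/E^{(m-1)}$. Since $pd_{\Bbb A} W_m (F_\infty) \leq 1$ by Greither's lemma \ref{3.8}, the Fitting ideal is multiplicative on this sequence, giving ${\rm Fit}_{\Bbb A}\, H^2_{Iw} (F_\infty, {\Bbb Z}_p (m)) = {\rm Fit}_{\Bbb A}(e_{m-1} X'(E_\infty)^0) \cdot {\rm Fit}_{\Bbb A}\, W_m (F_\infty)$. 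Then I identify the second factor: because $W_m (F_\infty)$ has finite projective dimension and is $\Lambda$-torsion, ${\rm Fit}_{\Bbb A}\, W_m (F_\infty) = {\rm char}_{\Bbb A}\, W_m (F_\infty)$, and by the localization argument of Lemma \ref{3.6} (the maximal finite submodule dies at the unique height-one prime over $p$ by the $\mu=0$ statement, and at primes away from $p$ everything is already torsion-free over the DVR) this equals ${\rm char}_{\Bbb A}\, H^2_{Iw} (F_\infty, {\Bbb Z}_p(m))$. Finally Theorem \ref{3.7}$(ii)$ rewrites this as $(e_{m-1}\, {\cal D }^{(1)} (E_\infty))^{\#}$, which yields $(i)$.

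For part $(ii)$ I pass to finite level by taking $\Gamma^{p^n}$-coinvariants of (3). The key input is that $c\,d_p\, G_S(F) \leq 2$, so $H^2_{Iw}(F_\infty,{\Bbb Z}_p(m))_{\Gamma^{p^n}} \xrightarrow{\sim} H^2_S(F_n,{\Bbb Z}_p(m))$, and similarly $W_m(F_\infty)_{\Gamma^{p^n}} = W_m(F_n)$ and ${\cal D }^{(m)}(E_\infty)_{\Gamma^{p^n}}$ maps onto ${\cal D }_n(E)$ by the transition maps recalled in Definition \ref{1.3}. Right-exactness of $(-)_{\Gamma^{p^n}}$ gives a surjection of the coinvariants sequence onto an exact sequence over ${\Bbb Z}_p[G_n]$; since $pd_{\Bbb A} W_m(F_\infty) \leq 1$, the module $W_m(F_\infty)$ is ${\Bbb Z}_p[G_n]$-cohomologically trivial for every $n$, so $\mathrm{Tor}_1^{\Bbb A}(W_m(F_\infty), {\Bbb Z}_p[G_n]) = 0$ and the sequence stays short exact after coinvariants — this is exactly the point that makes Fitting ideals descend cleanly. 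Hence ${\rm Fit}_{{\Bbb Z}_p[G_n]}\, H^2_S(F_n,{\Bbb Z}_p(m)) = {\rm Fit}_{{\Bbb Z}_p[G_n]}(e_{m-1} X'(E_\infty)^0)_{\Gamma^{p^n}} \cdot {\rm Fit}_{{\Bbb Z}_p[G_n]}\, W_m(F_n)$, and base change of Fitting ideals along ${\Bbb A} \to {\Bbb Z}_p[G_n]$ together with part $(i)$ identifies the second factor with $(e_{m-1}\, {\cal D }_n(E))^{\#}$, giving $(ii)$.

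The step I expect to be the main obstacle is verifying rigorously that coinvariants preserve the short-exactness of (3) at every finite level, i.e. the vanishing of the relevant $\mathrm{Tor}_1$ — equivalently, that ${\rm Fit}_{\Bbb A}$ genuinely commutes with the specialization ${\Bbb A} \to {\Bbb Z}_p[G_n]$ on the modules in play. For $W_m(F_\infty)$ this follows from $pd_{\Bbb A} \leq 1$ plus $H$-cohomological triviality, but one must also check the interaction with the $\Lambda$-level coinvariants and confirm that ${\rm Fit}_{\Bbb A}(e_{m-1}X'(E_\infty)^0)$ specializes to ${\rm Fit}_{{\Bbb Z}_p[G_n]}(e_{m-1}X'(E_\infty)^0)_{\Gamma^{p^n}}$ (Fitting ideals always commute with arbitrary base change, so this direction is automatic, but the identification of the first term as a submodule of $H^2_S(F_n,\cdot)$ after descent needs the snake-lemma diagram above to survive taking $\Gamma^{p^n}$-coinvariants). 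A secondary subtlety is that ${\cal D }_n(E)$ — defined as a finite-level object in Section 1 — must be matched with ${\cal D }^{(1)}(E_\infty)_{\Gamma^{p^n}}$ via the transition maps, which is where the reference to the explicit maps in \cite{So} is used.
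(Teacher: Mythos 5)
Your part (i) is exactly the paper's argument: multiplicativity of ${\rm Fit}_{\mathbb A}$ on the sequence (3) because $pd_{\mathbb A}\,W_m(F_\infty)\le 1$, the identification ${\rm Fit}_{\mathbb A}W_m(F_\infty)={\rm char}_{\mathbb A}W_m(F_\infty)={\rm char}_{\mathbb A}H^2_{Iw}(F_\infty,{\mathbb Z}_p(m))$ by the localization argument, and then theorem \ref{3.7}(ii). No complaint there.

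In part (ii), however, the step you yourself single out as the main obstacle is both incorrectly justified and unnecessary. The inference ``$pd_{\mathbb A}\,W_m(F_\infty)\le 1$, hence ${\rm Tor}_1^{\mathbb A}(W_m(F_\infty),{\mathbb Z}_p[G_n])=0$'' is a non sequitur: writing $\omega_n=\gamma^{p^n}-1$, so that ${\mathbb Z}_p[G_n]={\mathbb A}/\omega_n{\mathbb A}$, the free resolution $0\to{\mathbb A}\xrightarrow{\ \omega_n\ }{\mathbb A}\to{\mathbb Z}_p[G_n]\to 0$ gives ${\rm Tor}_1^{\mathbb A}(W,{\mathbb Z}_p[G_n])\simeq W[\omega_n]$, and a module of projective dimension $1$ can have plenty of $\omega_n$-torsion -- ${\mathbb Z}_p[G_n]$ itself is an example. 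So exactness of (3) after $\Gamma^{p^n}$-coinvariants does not follow from what you invoke, and the finite-level multiplicativity of Fitting ideals you then use would in addition require $pd_{{\mathbb Z}_p[G_n]}W_m(F_n)\le 1$, which you do not establish. The intended route bypasses the descended exact sequence altogether, and is the very reason the paper switches from determinants to Fitting ideals at this stage: Fitting ideals commute with arbitrary base change, so one simply applies the projection $\pi_n:{\mathbb A}\to{\mathbb Z}_p[G_n]$ to the identity of part (i). The left-hand side becomes ${\rm Fit}_{{\mathbb Z}_p[G_n]}\bigl(H^2_{Iw}(F_\infty,{\mathbb Z}_p(m))_{\Gamma^{p^n}}\bigr)={\rm Fit}_{{\mathbb Z}_p[G_n]}H^2_S(F_n,{\mathbb Z}_p(m))$ by the codescent isomorphism coming from $cd_p\,G_S(F)\le 2$; the first factor becomes ${\rm Fit}_{{\mathbb Z}_p[G_n]}\bigl((e_{m-1}X'(E_\infty)^0)_{\Gamma^{p^n}}\bigr)$, again by base change; and the second factor becomes $(e_{m-1}\mathcal{D}_n(E))^{\#}$ via the transition maps of definition \ref{1.3} -- the only genuinely finite-level compatibility needed, and the one point you correctly flag at the end. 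With that reorganization no ${\rm Tor}$ vanishing, no identification $W_m(F_\infty)_{\Gamma^{p^n}}=W_m(F_n)$, and no finite-level multiplicativity are required.
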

{\it Remark~:}
The perfectness of the two last terms in the exact sequence (3) implies that of the first term, hence the existence of 
${\rm char}_{\Bbb A} (e_{m-1}\, X'(E_\infty)^0)$. 
But the usual localization argument shows that this ideal is (1), and we recover a particular case of thm. \ref{3.7} (ii).

\subsubsection{} Recall that the module ${\cal D }_n (E)$ was defined in $\S$ 1.2 and its quotient ${\rm mod}\, p^n$ was described explicitly in kummerian terms. The interest of thm. \ref{3.9} (ii) lies in its comparison with already known results on refinements (for $m$ odd) of the Coates-Sinnott conjecture on Galois annihilators of the modules $H^2_S (F_n, {\Bbb Z}_p (m)) \simeq K_{2m-2} ({\cal O }_{F_n} [1/S])\otimes \mathbb{Z}_p$ (by Quillen-Lichtenbaum's conjecture, now a theorem). Note that for $m$ odd, the usual formulation of Coates-Sinnott gives no information other than ``zero kills everybody''. A refined conjecture was formulated by \cite{Sn} (resp. \cite{Nick}) in terms of leading terms (rather than values) of Artin $L$-functions at negative integers for abelian (resp. general) Galois extensions of number fields, and shown to be a consequence of the equivariant Tamagawa number conjecture (ETNC) for the Tate motives attached to these extensions. Let us return to the situation of the introduction, 
where $F/k$ is an 
abelian 
extension with group $G$, $k$ is totally real and $F$ is $CM$. We need to recall quickly the construction of a 
``canonical fractional ideal'' by Snaith and Nickel. We follow the presentation of \cite{Nick}, adapting it to our situation~:

- for the algebraic part, fix $m \geq 2$ and let ${\cal H }_{1-m} (F) = \displaystyle\mathop\oplus_{S_\infty} (2 \pi i)^{m-1} {\Bbb Z}$,
with action of complex conjugation (diagonally on $S_\infty$ and on $(2 \pi i)^{m-1}$). The Borel regulator 
$\rho_{1-m} : K_{2m-1} ({\cal O }_{F})\otimes \mathbb{Q} \to {\cal H }^{+}_{1-m} (F \otimes \mathbb{R})$ yields the existence of a
${\Bbb Q} [G]$-isomorphism $\phi_{1-m}\ :\ {\cal H }_{1-m} (F)^{+} \otimes {\Bbb Q} \displaystyle\mathop\to^\sim K_{2m-1} ({\cal O }_F) \otimes {\Bbb Q}$.
This allows, by applying the Quillen-Lichtenbaum conjecture (now a theorem), to construct a $G$-equivariant embedding (we fix $m$ and drop the index) 
$\phi\ :\ {\cal H }_{1-m} (F)^{+} \otimes {\Bbb Z}_p \hookrightarrow H^1_S (F, {\Bbb Z}_p (m))$ (note that this $H^1_S(\ldotp)$ does not depend on $S \supset S_p$, 
by a result of Soul\'e). The algebraic part of the canonical ideal will be ${\rm Fit}_{{\Bbb Z}_p [G]} ((coker\, \phi)^\ast)$.

- for the analytic part, consider the ring $R(G)$ of virtual characters of $G$ with values in $\overline{\Bbb Q}$ and let $L$ be a finite Galois extension of ${\Bbb Q}$ such that each representation of $G$ can be realized over $L$; let also $V_\chi$ be an $L[G]$-module with character $\chi$. We can form regulator maps~:
\[R_{\phi_{1-m}}\ :\ R(G) \longrightarrow {\Bbb C}^\ast\]
\[ \chi \mapsto\ det \Bigl( \rho_{1-m} \ldotp \phi_{1-m} \mid \ \Hom_G (V_{\displaystyle\mathop\chi^\vee}, {\cal H }^{+}_{1-m} (F) \otimes {\Bbb C} \bigl)\]
(where $\displaystyle\mathop\chi^\vee$ denotes the contragredient character).

Define then a function ${\cal A }^S_{\phi_{1-m}}\ :\ R(G) \to {\Bbb C}^\ast$, $\chi \mapsto R_{\phi_{1-m}} (\chi) / L^\ast_S (1-m, \chi)$, 
where $L_S (s, \chi)$ is the $S$-truncated Artin $L$-function and $L^\ast_S (1-m, \chi)$ is the leading term of this function at $(1-m)$. 
Gross' higher analogue of Stark's conjecture states that ${\cal A }^S_{\phi_{1-m}} (\chi^\sigma) = {\cal A }^S_{\phi_{1-m}} (\chi)^\sigma$ 
for all $\sigma \in Aut\,({\Bbb C})$. Assuming this conjecture and choosing an identification ${\Bbb C} \simeq {\Bbb C}_p,$ we can now define the 
``$p$-adic canonical ideal'' (we drop the index in $\phi_{1-m}$).
\begin{defi}\label{3.10} $\mathfrak{K}^S_{1-m} (p) = \ ({\rm Fit}_{{\Bbb Z}_p [G]} \bigl(coker\, \phi)^\ast \ldotp ({\cal A }^S_\phi)^{-1}\bigl))^{\#}$.
\end{defi}
NB~: this is actually an ``intermediate'' ideal on the construction of Snaith-Nickel, but it is all we need.

\begin{theo}\label{3.11} (\cite{Sn,Nick})
Suppose that Gross' conjecture, and also the ETNC for the pair $({\Bbb Q} (1-m)_F, {\Bbb Z} [{1 \over 2}] [G])$, $m \geq 2$, hold for the abelian extension $F/k$, 
where $k$ is totally real and $F$ is $CM$. Then ${\rm Fit}_{{\Bbb Z}_p [G]} H^2_S (F, {\Bbb Z}_p (m)) = \mathfrak{K}^S_{1-m} (p)$.
\end{theo}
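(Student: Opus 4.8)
The plan is to deduce the identity from the hypothesis itself, i.e.\ from the $p$-part of the ETNC for the Tate motive ${\Bbb Q}(1-m)_F$, by translating the ETNC --- an equality in a relative algebraic $K$-group --- into an equality of invertible fractional ideals of ${\Bbb Z}_p[G]$. Gross' conjecture will enter only to guarantee that the analytic side of the ETNC is well defined over ${\Bbb Q}_p[G]$.

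First I would work with the perfect complex $C^{\cdot}={\bf R}\Gamma_{\text{\'et}}({\cal O}_F[1/S],{\Bbb Z}_p(m))$ of ${\Bbb Z}_p[G]$-modules (perfect since $cd_p\,G_S(F)\leq 2$; cf.\ \cite{W}, $\S$ 3), whose cohomology computes $H^1_S(F,{\Bbb Z}_p(m))\cong K_{2m-1}({\cal O}_F)\otimes{\Bbb Z}_p$ and $H^2_S(F,{\Bbb Z}_p(m))\cong K_{2m-2}({\cal O}_F[1/S])\otimes{\Bbb Z}_p$ by Quillen--Lichtenbaum. For $m\geq 2$ the groups $K_{2m-2}({\cal O}_F)$ and the semi-local $H^2(F_v,{\Bbb Z}_p(m))$ are all finite, so $H^2_S(F,{\Bbb Z}_p(m))$ is finite (and the clause $\delta_m=0$ of Proposition~\ref{3.2}(i) is automatic), while $H^1_S(F,{\Bbb Z}_p(m))$ has rank $d_m$. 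The embedding $\phi=\phi_{1-m}$ of Definition~\ref{3.10}, a map between ${\Bbb Z}_p$-modules of the same rank $d_m$ with finite cokernel, together with the Borel regulator $\rho_{1-m}$, furnishes the real trivialization of $\det_{{\Bbb Z}_p[G]}C^{\cdot}$ needed to state the ETNC: over ${\Bbb Q}_p$ one gets ${\cal H}_{1-m}(F)^{+}\otimes{\Bbb Q}_p\cong H^1_S(F,{\Bbb Z}_p(m))\otimes{\Bbb Q}_p$ and $H^2_S(F,{\Bbb Z}_p(m))\otimes{\Bbb Q}_p=0$.

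Next I would invoke the $p$-local ETNC for $({\Bbb Q}(1-m)_F,{\Bbb Z}[{1\over 2}][G])$: after fixing ${\Bbb C}\simeq{\Bbb C}_p$, it asserts in the relative group $K_0({\Bbb Z}_p[G],{\Bbb C}_p[G])$ the equality of the class of $C^{\cdot}$ with the $(\rho_{1-m},\phi_{1-m})$-trivialization and the class built from the values ${\cal A}^S_{\phi}(\chi)$, $\chi\in R(G)$, i.e.\ from the equivariant leading term of the $S$-truncated Artin $L$-function at $s=1-m$ weighed against the Borel regulator. The higher analogue of Stark's conjecture of Gross is precisely what makes ${\cal A}^S_{\phi}(\chi^{\sigma})={\cal A}^S_{\phi}(\chi)^{\sigma}$ hold for all $\sigma$, hence makes ${\cal A}^S_{\phi}$ descend to a well-defined element of $Q({\Bbb Z}_p[G])^{\ast}$ and the analytic side a genuine class in the relative $K$-group. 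Since $p$ is odd and ${\Bbb Z}_p[G]$ is semi-local, I would then push this equality down along the determinant map to the group ${\cal J}({\Bbb Z}_p[G])\cong Q({\Bbb Z}_p[G])^{\ast}/{\Bbb Z}_p[G]^{\ast}$ of invertible fractional ideals. A computation of the Knudsen--Mumford determinant of $C^{\cdot}$ through its two cohomology modules (using the functorial properties in \cite{W}, proposition 1.5) --- the finite $H^2_S$ contributing its Fitting ideal, the rationally trivialized $H^1_S$ contributing ${\rm Fit}_{{\Bbb Z}_p[G]}({\rm coker}\,\phi)$ --- turns the ETNC equality into ${\rm Fit}_{{\Bbb Z}_p[G]}\bigl(H^2_S(F,{\Bbb Z}_p(m))\bigr)={\rm Fit}_{{\Bbb Z}_p[G]}({\rm coker}\,\phi)\cdot({\cal A}^S_{\phi})^{-1}$ up to the sign $(\,\cdot\,)^{\#}$. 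Finally, the lemma ${\rm Fit}(M^{\ast})={\rm Fit}(M)^{\#}$ (valid here because the relevant $p$-Sylow subgroups are cyclic, as already used in the proof of Proposition~\ref{2.2}) lets me replace ${\rm coker}\,\phi$ by $({\rm coker}\,\phi)^{\ast}$ and absorb the outer $\#$, recovering precisely $\mathfrak{K}^S_{1-m}(p)$ of Definition~\ref{3.10}.

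The main obstacle is this middle step. The ETNC lives in a relative $K$-group carrying intricate sign, duality and twisting conventions, and one has to identify the $p$-adic realization of the Borel regulator occurring there with the specific normalization of $\phi_{1-m}$ in Definition~\ref{3.10}, and to decide whether it is $\det_{{\Bbb Z}_p[G]}(H^2_S)$ or ${\rm Fit}_{{\Bbb Z}_p[G]}(H^2_S)$ that genuinely appears --- equivalently, to control the ${\Bbb Z}_p[G]$-projective dimension of the finite modules involved, since over ${\Bbb Z}_p[G]$ (unlike over $\Lambda$) ``$\det = {\rm Fit}$'' for finite modules is not automatic. This is why the reference records $\mathfrak{K}^S_{1-m}(p)$ only as an ``intermediate'' ideal and phrases the conclusion with Fitting ideals rather than characteristic ideals; the detailed verification is carried out in \cite{Sn} and \cite{Nick}.
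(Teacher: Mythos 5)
You should first be aware that the paper does not actually prove this statement: its ``proof'' is the citation \emph{See \cite{Nick}, end of the proof of thm 4.1, as well as remark, p.~14}, i.e.\ Theorem \ref{3.11} is quoted from \cite{Sn} and \cite{Nick}. Your sketch does reproduce the overall strategy of those references (state the $p$-part of the ETNC in the relative group $K_0({\Bbb Z}_p[G],{\Bbb C}_p[G])$ using the complex ${\bf R}\Gamma_{\text{\'et}}({\cal O}_F[1/S],{\Bbb Z}_p(m))$ trivialized by $\rho_{1-m}$ and $\phi_{1-m}$, use Gross' higher Stark conjecture to make ${\cal A}^S_{\phi}$ Galois-equivariant and hence rational, then descend to ideals), so as an outline it is pointed in the right direction.

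However, taken as a proof it has a genuine gap, and it is exactly at the step you flag and then defer. Over ${\Bbb Z}_p[G]$ the finite module $H^2_S(F,{\Bbb Z}_p(m))$ is in general \emph{not} cohomologically trivial, hence not perfect, so it has no determinant of its own and the multiplicativity $char_R(C^\cdot)=\prod_n (char_R H^nC^\cdot)^{(-1)^n}$ (\cite{W}, prop.\ 1.5) does not apply; the assertion that ``the finite $H^2_S$ contributes its Fitting ideal'' to $\det_{{\Bbb Z}_p[G]}C^\cdot$ is therefore not a formal pushdown but is precisely the substantive content of Nickel's argument, which replaces determinants by Fitting invariants of suitable presentations rather than reading Fitting ideals off cohomology. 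Moreover, your appeal to ${\rm Fit}(M^\ast)={\rm Fit}(M)^{\#}$ ``because the relevant $p$-Sylow subgroups are cyclic'' is not available here: in Theorem \ref{3.11} the group $G=\Gal(F/k)$ is an arbitrary abelian group ($F$ CM over a totally real $k$), whose $p$-Sylow subgroup need not be cyclic; the paper invokes that lemma (via the appendix of [MW]) only in the semi-simple situation of Proposition \ref{2.2}. Without it, the identification of the ideal produced by your determinant computation with $\mathfrak{K}^S_{1-m}(p)$ of Definition \ref{3.10}, which is built from $({\rm coker}\,\phi)^\ast$ and carries the outer $(\cdot)^{\#}$, is unjustified. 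So your proposal is an accurate road map to \cite{Sn,Nick}, but the passage from the $K$-theoretic ETNC equality to the Fitting-ideal equality, and the duality/twist bookkeeping, still have to be taken from those references, just as the paper does.
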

\begin{proof} See \cite{Nick}, end of the proof of thm 4.1, as well as remark, p. 14.\end{proof} 
{\it Remarks and prospective~:}
\begin{enumerate}
 \item For even $m \geq 2$, further calculations show that thm \ref{3.11} contains the $p$-part of the usual Coates-Sinnott conjecture.
\item If $k = {\Bbb Q}$, Gross' conjecture and the ETNC hold true, and thm  \ref{3.11} becomes unconditional. Its comparison with thm  \ref{3.9} could give an analytic meaning to the parasite modules $(e_{m-1} X'(E_\infty)^0)_{\Gamma^{p^n}}$. ``Numerical'' information could also be obtained by computing the orders of the groups $K_{2n-2} ({\cal O }_{F_n})\otimes\mathbb{Z}_p$. 
For example, in the semi-simple case, where $\Gamma_n$ intervenes in place of $G_n$, 
this order was computed by \cite{M} on terms of values of $L_p$-functions at {\it positive} integers.
\item Instead of leading terms of Artin $L$-functions, one could also appeal to derivatives as in \cite{BdJG}. A natural (but resting only on thin air) query would be~: is there any conceptual link with thm  \ref{3.7}, knowing that ${\cal D }_n (E)$ can be interpreted as a module of ``$p$-adic Gauss sums'' (\cite{NN}, $\S$ 4.1) ?
\item A natural expectation would be the extension of thm  \ref{3.7} to the relative abelian case $(k \not= {\Bbb Q})$. But then a serious obstacle is 
the absence of special elements (at least non conjecturally). Partial progress has been made by \cite{Nico}, starting from the idea of replacing special elements 
by $L_p$-functions over $k$; this is a natural idea since, when $k = {\Bbb Q}$, special elements and $L_p$-functions are ``equivalent'' by Coleman's theory.
\item Finally, one would of course wish to deal with the non abelian case, in view of the non commutative EMC recently proved by \cite{Ka} and \cite{RW2}. But a non commutative analogue of the ``limit theorem'' is missing.
\end{enumerate}

\vspace{.3cm}

{\bf Acknowledgements~:} It is a pleasure for the author to thank Dr. Malte Witte for many useful discussions on his ``limit theorem''.


\backmatter
\bibliography{marsnguyen13}

\end{document}